\documentclass{elsarticle}

\usepackage{hyperref}
\hypersetup{
	       colorlinks,
	       linkcolor={red!50!black},
	       citecolor={blue!50!black},
	       urlcolor={blue!80!black}
}
%\modulolinenumbers[0]

\journal{Linear Algebra and its Applications}

%%%%%%%%%%%%%%%%%%%%%%%
%% Elsevier bibliography styles
%%%%%%%%%%%%%%%%%%%%%%%
%% To change the style, put a % in front of the second line of the current style and
%% remove the % from the second line of the style you would like to use.
%%%%%%%%%%%%%%%%%%%%%%%

%% Numbered
%\bibliographystyle{model1-num-names}

%% Numbered without titles
%\bibliographystyle{model1a-num-names}

%% Harvard
%\bibliographystyle{model2-names.bst}\biboptions{authoryear}

%% Vancouver numbered
%\usepackage{numcompress}\bibliographystyle{model3-num-names}

%% Vancouver name/year
%\usepackage{numcompress}\bibliographystyle{model4-names}\biboptions{authoryear}

%% APA style
%\bibliographystyle{model5-names}\biboptions{authoryear}

%% AMA style
%\usepackage{numcompress}\bibliographystyle{model6-num-names}

\usepackage[colorinlistoftodos,prependcaption,textsize=footnotesize]{todonotes}
\usepackage[normalem]{ulem}
\usepackage{soul}
\usepackage{amsmath}
\usepackage{amsthm}
%\numberwithin{equation}{section}
\usepackage{amsfonts}
\usepackage{amssymb}         %additional math symbols (see amsguide.tex)
\usepackage{amsopn}
\usepackage{latexsym}
\usepackage{graphicx}        %graphic and .eps files
\usepackage{mathrsfs}		%calligraphic fonts
\usepackage{psfrag}
\usepackage{algorithmic}
\usepackage{algorithm}
\usepackage{color}
\usepackage{verbatim}
\usepackage{empheq}
\usepackage{url}
\usepackage{enumerate}
\usepackage{pgfplots}
\usepackage{csvsimple}
\pgfplotsset{compat=newest}
\usepackage{tikz}
\usepackage{numprint}
\usepackage{varioref}
\usepackage{url}
\usepackage{booktabs} 

% Private macros here (check that there is no clash with the style)

% Natbib setup for numeric style
%\usepackage{natbib}
%\NatBibNumeric
%\def\bibfont{\small}%
%\def\bibsep{\smallskipamount}%
%\def\bibhang{24pt}%
%\def\BIBand{and}%
%\def\newblock{\ }%
%\bibpunct[, ]{[}{]}{,}{n}{}{,}%
%
%%% Hyperref setup

% When hyperref is used, otherwise outcomment 
         % When hyperref is used, otherwise outcomment 

%% Setup of theorem styles.
% In the reviewing and copyediting stage enter the manuscript number.
%\MANUSCRIPTNO{} % When the article is logged in and DOI assigned to it,
%   this manuscript number is no longer necessary

\newtheorem{thm}{Theorem}
\newtheorem{hyp}[thm]{Hypothesis}
\newtheorem{cor}[thm]{Corollary}
\newtheorem{prop}[thm]{Proposition}
\newtheorem{lem}[thm]{Lemma}
\newtheorem{rmk}[thm]{Remark}
\newtheorem{defi}[thm]{Definition}
%\newproof{proof}{Proof}

%\newcommand{\transpose}[1]{{#1}^{\mbox{\rm \scriptsize T}}}
\newcommand{\transpose}[1]{{#1}^{\top}}
\newcommand{\subgaus}[1]{\left\|#1\right\|_{\psi_2}}
\newcommand{\subexp}[1]{\left\|#1\right\|_{\psi_1}}

\newcommand{\eps}{\varepsilon}

\newcommand{\argmin}{\mathop{\rm argmin}\limits}

\renewcommand{\S}{\mathcal{S}}
  
\newcommand{\PSDcone}[1]{{\mathcal{S}^{#1}_+}}

%% `Elsevier LaTeX' style
\bibliographystyle{elsarticle-num}
%%%%%%%%%%%%%%%%%%%%%%%

\begin{document}

\begin{frontmatter}

\title{Random projections of linear and semidefinite problems with linear inequalities}

\author{Pierre-Louis Poirion\corref{cor1}}
\ead{pierre-louis.poirion@riken.jp}
\address{Center for Advanced Intelligence Project, RIKEN, Japan}

\author{Bruno F. Louren\c{c}o}
\ead{bruno@ism.ac.jp}
\address{Department of Statistical Inference and Mathematics, Institute of Statistical Mathematics, Japan}

\author{Akiko Takeda}
\ead{takeda@mist.i.u-tokyo.ac.jp}
\address{Department of Creative Informatics, Graduate School of Information Science and Technology,	University of Tokyo, Japan and Center for Advanced Intelligence Project, RIKEN, Japan}

\cortext[cor1]{Corresponding author}
% Enter all authors

\begin{abstract}
The Johnson-Lindenstrauss Lemma states that there exist linear maps that project a set of points of a vector space into a space of much lower dimension such that the Euclidean distance between these points is approximately preserved. This lemma has been previously used to prove that we can randomly aggregate, using a random matrix whose entries are drawn from a zero-mean sub-Gaussian distribution, the equality constraints of an Linear Program (LP) while preserving approximately the value of the problem. In this paper we extend these results to the inequality case by introducing a random matrix with non-negative entries that allows to randomly aggregate inequality constraints of an LP while preserving approximately the value of the problem. By duality, the approach we propose  allows to reduce both the number of constraints and the dimension of the problem while obtaining some theoretical guarantees on the optimal value. We will also show an extension of our results to certain semidefinite programming instances.
\end{abstract}

\begin{keyword}
random projection \sep linear programming \sep semi-definite programming
\end{keyword}

\end{frontmatter}

%\linenumbers

\section{Introduction}\label{sec:int}
Random matrices are matrices $T \in \mathbb{R}^{k \times m}$ whose entries are drawn from a probability distribution. When the underlying distribution is properly chosen, these matrices can have some very interesting properties: the Johnson-Lindenstrauss Lemma (JLL), \cite{dasgupta,jllemma}, states that, if the entries of $T$ are drawn independently from the standard normal distribution  $\mathcal{N}(0,\frac{1}{k})$, it is possible to project a set of $n$ points of $\mathbb{R}^m$ into a space of dimension $k=O(\frac{\log(n)}{\varepsilon^2})$ while preserving approximately (with $\varepsilon$ precision) the Euclidean distance between these points with arbitrarily high probability (w.a.h.p.).\\

Recently, this result has been exploited in \cite{MOR_RP} to prove that equality constraints of an LP written in standard form with inputs  $c \in \mathbb{R}^{n}$, $A \in \mathbb{R}^{m \times n}$ and $b \in  \mathbb{R}^{m}$, could be randomly aggregated using a random matrix $T$ with $k < m$, into a new LP:
\begin{center}
	\begin{minipage}{\textwidth}
		\begin{minipage}{0.48\textwidth}
			\begin{equation*}
			\left\{\begin{array}{llll}
			\min\limits_x & \transpose{{c}}x  \\
			& Ax = {b} \\
			& x \geq 0
			\end{array}\right.
			\end{equation*}
		\end{minipage}
		\begin{minipage}{0.48\textwidth}
			\begin{equation*}
			\left\{\begin{array}{llll}
			\min\limits_x & \transpose{{c}}x  \\
			& TAx = T{b} \\
			& x \geq 0
			\end{array}\right.
			\end{equation*}
		\end{minipage}
	\end{minipage}
\end{center}
while preserving, approximately, w.a.h.p.~the optimal value of the problem. 
Although this result makes it possible to aggregate equality constraints of the form ``$Ax = b$'', aggregating an \emph{inequality constraint} of the form ``$Ax \leq b$'' is a far subtler issue.

For example, one might be tempted to take a look at the duals of the problems above, which leads to problems of the format $\max \{\transpose{b}y \mid c - \transpose{A}y\geq 0 \}$ and $\max\{\transpose{b}(\transpose{T}y_{T}) \mid c - \transpose{A}{\transpose{T}y_T}\geq 0 \}$. Performing the substitution $z = \transpose{T}y_{T}$ leads to a dual problem with less variables, but the number of inequality constraints remains the same. So this approach cannot be used to aggregate inequality constraints.
Alternatively, one may consider adding slack variables $s\in \mathbb{R}^m_+$ to transform inequality constraints into equality constraints. However, this is unlikely to be efficient in the random projection framework because, although the number of constraints is reduced, the number of variables increases by $m$ in the projected problem. Hence using slack variables is a non-starter and we can not expect to reduce the solving time of the problem.

Indeed, to randomly aggregate a set of inequality constraints ``$Ax \le b$'', we need a random matrix $S$ whose entries $S_{ij}$ are non-negative.  In this paper, we propose the first method that allows to randomly aggregate a set of inequality constraints in an LP. More precisely, let us consider the pair:
\begin{center}
	\begin{minipage}{\textwidth}
		\begin{minipage}{0.48\textwidth}
			\begin{equation}\label{eq:lp}
			\mathcal{P}\left\{\begin{array}{llll}
			\min\limits_x & \transpose{{c}}x  \\
			& Ax \ge {b} \\
			& x \in \mathbb{R}^n
			\end{array}\right.
			\end{equation}
		\end{minipage}
		\begin{minipage}{0.48\textwidth}
			\begin{equation}\label{eq:lpt}
			\mathcal{P}_S\left\{\begin{array}{llll}
			\min\limits_x & \transpose{{c}}x  \\
			& SAx \ge S{b} \\
			& x \in \mathbb{R}^n
			\end{array}\right.
			\end{equation}
		\end{minipage}
	\end{minipage}
\end{center}
with $c \in \mathbb{R}^n,\ A \in \mathbb{R}^{m \times n},\ b \in \mathbb{R}^m$. Here, $S \in \mathbb{R}^{k \times m}$ is a random \textit{iid} matrix such that $S_{ij}=\frac{1}{k}T_{ij}^2$ where $T_{ij}$ is drawn from the standard normal distribution $\mathcal{N}\left(0,1\right)$. Although this looks very similar to the equality case it is actually quite different: the random matrix $S$ does not satisfy the JLL property, and hence, a different analysis should be applied. Notice that since each entry of $S$ is non-negative, $\mathcal{P}_S$ is a relaxation of $\mathcal{P}$; $v(\mathcal{P}_S) \le v(\mathcal{P})$ holds (where $v(\cdot)$ denotes the optimal value of an optimization problem). The difficult part is to prove that there exists $\delta(k) >0$ such that, w.a.h.p., 
$$ v(\mathcal{P}) -\delta(k) \le v(\mathcal{P}_S) \le v(\mathcal{P}),$$
where $\delta(k)$ is a decreasing function of $\varepsilon$ (recall that typically $k=O(\frac{\log(m)}{\varepsilon^2})$), which represents the distortion in distance after projection.

More generally, we will consider a pair of linear optimization problems over a cone $\mathcal{K}$ which is a product of the non-negative orthant and semidefinite cones, i.e., $\mathcal{K}=\mathbb{R}^m_{+} \times \PSDcone{p_1}\times \cdots \times \PSDcone{p_l}$:
\begin{center}
	\begin{minipage}{\textwidth}
		\begin{minipage}{0.48\textwidth}
			\begin{equation}\label{eq:lpg}
			\mathcal{P}^{\mathcal{K}}\left\{\begin{array}{llll}
			\min\limits_x & \transpose{{c}}x  \\
			& Ax- {b} \in \mathcal{K} \\
			& x \in \mathbb{R}^n
			\end{array}\right.
			\end{equation}
		\end{minipage}
		\begin{minipage}{0.48\textwidth}
			\begin{equation}\label{eq:lpgt}
			\mathcal{P}^{\mathcal{K}}_Q\left\{\begin{array}{llll}
			\min\limits_x & \transpose{{c}}x  \\
			& \mathcal{Q}(Ax - {b}) \in \mathcal{Q}(\mathcal{K}) \\
			& x \in \mathbb{R}^n
			\end{array}\right.,
			\end{equation}
		\end{minipage}
	\end{minipage}
\end{center}
where $\mathcal{Q}$ is a linear map such that $\mathcal{K}=\mathbb{R}^m_{+} \times \PSDcone{p_1}\times \cdots \times \PSDcone{p_l}$ is mapped to $\mathbb{R}^k_{+} \times \PSDcone{q_1}\times \cdots \times \PSDcone{q_l}$, where $k <m$ and $q_i < p_i$ for all $i$. We will prove that one can build a random map $\mathcal{Q}$ such that, with arbitrarily high probability, $v(\mathcal{P}^{\mathcal{K}}_Q)$ approximates  $v(\mathcal{P}^{\mathcal{K}})$. %Notice that the pure SDP case has been considered in \cite{bluhm2019}. \\

We now review some related works. Notice that in all the works using a random projection matrix $T$ to reduce the dimension of a problem, $T$  preserves approximately the distances, i.e, $\|Tx\|_2 \approx \|x\|_2$ with high probability, and its entries $T_{ij}$ have all zero expectation.
In numerical linear algebra, random projections are used to compress a matrix into a smaller one where computations can be performed quickly  thereby accelerating the solution of the original problem, e.g., \cite{woodruff2014, drineas2016, derezinski202}.
In optimization, random projections are also referred to as \emph{sketching}, especially if random matrices are used not to reduce the dimension of the problem but to reduce the sample size of some data matrix of the problem. For example in a least-square problem setting, i.e. $\min\limits_{x \in C} \|y-X x\|^2$ where $C\subseteq \mathbb{R}^n$ is a convex set, $X \in \mathbb{R}^{m'\times n}$ is the design matrix and $y$ is the response vector,
the sample size $m'$ is reduced using random projections.
In \cite{pilanci2014,2dnoesy,wang2017,dobriban2019}, random projections are used to reduce the size of some large dimensional least squares problem: the data $(X,Y)$ of the problem is replaced by a lower dimensional sketched-data $(SX,SY)$ where $S$ is a random matrix. 
In \cite{necoara2018} and \cite{berahas2020}, iterative methods using random projections are proposed. 
%the authors propose a randomized sketch descent method to solve large-scale general smooth optimization problems with multiple linear coupled constraints (however non-negativity is not considered in the paper). In~\cite{berahas2020}, the authors use sketching in the context of Newton's method for solving finite-sum optimization problems in which the number of variables and data points are both large. 
In \cite{bluhm2019}, an application of random projections to the pure semidefinite programming (SDP) case is considered. The random projection map, $M \mapsto TMT^\top$,  considered in \cite{bluhm2019}, preserves the structure of the semidefinite cone, and hence, in opposition to the LP case, the inequality constraints are not a problem. Later in this paper, we will further discuss the relation between the approach in \cite{bluhm2019} and ours. 

To the best of our knowledge,
this is the first work to randomly aggregate linear inequalities. In addition, one of the main contribution of our paper is to use a \emph{non-negative} random matrix for sketching. Indeed, 
 in all papers we have seen using random random projections to reduce the size of a problem, the entries of the random matrices used for the aggregation have zero expectation. 
We remark that zero expectation matrices cannot be used to deal with inequalities constraints because such matrix would typically have negative entries (approximately half of the entries would be negative) and it would imply that the positive orthant is not usually mapped into another positive orthant.

This work is divided as follows. 
In Section \ref{sec:2} we will recall some basic facts about concentration inequalities. Then, in Section \ref{sec:3} we will recall some known results and derive some new ones for random matrices. In Section \ref{sec:4} we will give the main result of this paper and in Section \ref{sec:6}, we will restrict the LP case and obtain a simpler bound. Finally in Section \ref{sec:5} we will present some numerical results.

We resume all the notations used in the paper in Table \ref{tab:myfirsttable}.
{\renewcommand{\arraystretch}{1.15}\begin{table}[tb]
		\centering
		\begin{tabular}{ | c | l | }
			\hline
			Notation & Convention  \\
			\hline
			$\subgaus{X}$ & the sub-Gaussian norm of a random vector \\
			\hline
			$\subexp{X}$ & the sub-exponential norm of a random vector\\
			\hline
			$\mathcal{C}_1$ & absolute constant \\
			\hline
			$\S^p$ & the $p\times p$ real symmetric matrices\\
			\hline
			$\PSDcone{p}$ & the cone of $p\times p$ real positive semidefinite matrices\\
			\hline
			$x^\top y$ & the Euclidean scalar product between $x$ and $y$\\
			\hline
			$\langle M_1, M_2 \rangle_F$ & the Frobenius scalar product of matrices $M_1$ and $M_2$ \\  
			\hline
			$\|M\|_F$, $\|M\|_*$ & respectively the Frobenius norm and the nuclear norm of matrix $M$ \\
			\hline
			$\|M\|_i$ & the induced $\| \cdot \|_i$ for matrix $M$ for $i=1,2$: $\|M\|_i=\max\limits_{\|x\|_i=1}\|Mx\|_i$\\
			\hline
			$|x|$, $x \in \mathbb{R}^n$ & vector whose components are the absolute value of those of $x$ \\
			\hline
			$v(P)$ & optimal value of the optimization problem $(P)$ \\
			\hline 
			$A \circ B $ & Hadamard product of matrices $A$ and $B$ \\
			\hline
			$D(A) $ & matrix whose diagonal is the vector $a$   \\
			\hline
			$D^{-1}(a) $ & diagonal of matrix $A$  \\
			\hline
			$\mathbf{1}$ & the all one vector  \\
			\hline
			
		\end{tabular}
		\caption{Notational conventions}
		\label{tab:myfirsttable}
\end{table}}
\section{Concentration inequalities}\label{sec:2}

In this section we recall some basic facts about concentration inequalities.

\begin{defi}[Sub-Gaussian random variables]
	Let $X$ be a zero mean random variable such that there exists $K>0$ such that for all $t>0$,
	\begin{equation}\label{eq:subgauss}
	P(|X|>t)\le 2\exp\left(-\frac{t^2}{K^2}\right).
	\end{equation}
	Then $X$ is said to be \emph{sub-Gaussian}. The \emph{sub-Gaussian norm} of $X$ is defined to be the smallest $K$ satisfying \eqref{eq:subgauss} and is denoted by $\subgaus{X}$.
\end{defi}

\begin{rmk}\label{rem:2}
	A classical example (\cite[Examples 2.5.8]{vershyninbook}) of sub-Gaussian random variable is a Gaussian random variable $X\sim N(0,\sigma^2)$ with $\subgaus{X} \le 2\sigma$.
\end{rmk}

\begin{defi}[Sub-exponential random variables]
	Let $X$ be a zero mean random variable and let $K>0$ such that for all $t>0$,
	\begin{equation}\label{eq:subexp}
	P(|X|>t)\le 2\exp\left(-\frac{t}{K}\right).
	\end{equation}
	Then $X$ is said to be \emph{sub-exponential}. The \emph{sub-exponential norm} of $X$ is defined to be the smallest $K$ satisfying \eqref{eq:subexp} and is denoted by $\subexp{X}$.
\end{defi}

Sub-Gaussian and sub-exponential random variables are closely related, as we can see that any sub-Gaussian random variable is also sub-exponential (only large values of $t$ are relevant).  Furthermore, it turns out that the product of two sub-Gaussian random variables is sub-exponential.

\begin{lem}[\phantom{}{\cite[Lemma 2.7.7]{vershyninbook}}]\label{lem:1}
	Let $X$ and $Y$ be sub-Gaussian random variables, then XY is sub-exponential, furthermore
	$$\subexp{XY} \le \subgaus{X} \subgaus{Y}.$$
	Also if $Z$ is sub-exponential, then $\subexp{Z - E(Z)} \le  2\subexp{Z}$.
\end{lem}
Next we recall the Bernstein inequality whose proof can be found in \cite[Theorem 2.8.2]{vershyninbook}.
\begin{prop}[Bernstein inequality]
	Let $Y_1,...,Y_N$ be independent, mean zero, sub-exponential random variables. Then, for every $t \ge 0$, we have
	\begin{equation}\label{eq:bern}
	P\left(|\sum\limits_{i=1}^N Y_i| \ge t\right) \le 2\exp\left(-\mathcal{C}_1\min\left(\frac{t^2}{\sum \subexp{Y_i}^2}, \frac{t}{\max \subexp{Y_i} }\right) \right),
	\end{equation}
	where $\mathcal{C}_1>0$ is an absolute constant.
\end{prop}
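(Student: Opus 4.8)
The plan is to use the exponential moment method (Chernoff bound). First I would translate the sub-exponential tail bound into a bound on the moment generating function (MGF) near the origin: for a mean-zero random variable $Y$ with $\subexp{Y} = K$, one shows that there are absolute constants $c, C > 0$ with
\[
E\left[e^{\lambda Y}\right] \le \exp\left(C\lambda^2 K^2\right) \quad \text{whenever } |\lambda| \le \frac{c}{K}.
\]
This is obtained by first passing from the tail bound to a moment bound of the form $E|Y|^p \le (C_0 K p)^p$ using the identity $E|Y|^p = \int_0^\infty p\, t^{p-1} P(|Y| > t)\, dt$ together with the defining inequality for $\subexp{Y}$, and then Taylor-expanding $e^{\lambda Y}$ term by term. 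The mean-zero hypothesis kills the linear term, so the leading surviving contribution is quadratic in $\lambda$, and summing the resulting series over the range $|\lambda| \le c/K$ yields the stated MGF bound.

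Next, using the Chernoff bound together with independence, for every $\lambda \ge 0$ one has
\[
P\left(\sum_{i=1}^N Y_i \ge t\right) \le e^{-\lambda t} \prod_{i=1}^N E\left[e^{\lambda Y_i}\right] \le \exp\left(-\lambda t + C\lambda^2 \sum_{i=1}^N \subexp{Y_i}^2\right),
\]
valid for $0 \le \lambda \le c / \max_i \subexp{Y_i}$. I would then minimize the exponent over $\lambda$ in this admissible interval. The unconstrained minimizer is $\lambda^\star = t / \bigl(2C \sum_i \subexp{Y_i}^2\bigr)$: when it lies inside the interval, substituting it gives the sub-Gaussian regime $\exp\bigl(-c' t^2 / \sum_i \subexp{Y_i}^2\bigr)$; when it exceeds the right endpoint, I would instead take $\lambda = c / \max_i \subexp{Y_i}$, which gives the sub-exponential regime $\exp\bigl(-c'' t / \max_i \subexp{Y_i}\bigr)$. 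Combining the two cases produces exactly the $\min(\cdot, \cdot)$ structure appearing in the exponent of \eqref{eq:bern}.

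Finally, applying the same chain of estimates to the variables $-Y_1, \dots, -Y_N$ (which are again mean-zero and sub-exponential with the same norms) controls the lower tail $P\bigl(\sum_i Y_i \le -t\bigr)$ by the identical bound, and a union bound over the two one-sided events introduces the factor $2$ and delivers the two-sided inequality \eqref{eq:bern}.

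The hard part will be the MGF estimate in the first step: one must carefully track the absolute constants through the tail-to-moment-to-MGF chain, and in particular verify that the mean-zero assumption cancels the linear-in-$\lambda$ term so that the correction is genuinely $O(\lambda^2 K^2)$ rather than $O(\lambda K)$. This quadratic behavior for small $\lambda$ is precisely what produces the sub-Gaussian tail in the small-deviation regime, while the restriction $|\lambda| \le c/K$ on the admissible step sizes is what forces the sub-exponential tail for large deviations.
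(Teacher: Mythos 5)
Your proposal is correct: it is the standard Chernoff-type proof of Bernstein's inequality (MGF bound for mean-zero sub-exponential variables on the interval $|\lambda| \le c/K$, optimization of the exponent over the admissible range to produce the $\min$ structure, and symmetrization for the two-sided bound), and the case analysis at the endpoint of the admissible interval is handled properly. The paper itself states this proposition as a recalled classical fact without proof, so there is nothing to compare against beyond noting that your argument is precisely the one given in the standard reference (Vershynin's book) from which the paper draws its concentration toolbox.
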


We now recall the notion of $\hat \varepsilon$-net:
\begin{defi}\label{def:epsnet}
	Given a subset $K \subset \mathbb{R}^m$ and $\hat \varepsilon >0$, we say that a subset $\mathcal{N}$ is an $\hat \varepsilon$-net of $K$ if every point of $K$ is within $\hat \varepsilon$ of a point in $\mathcal{N}$, i.e.,
	$$\forall x \in K,\ \exists y \in \mathcal{N}\ s.t.\ \|x-y\|_2 \le \hat \varepsilon. $$ 
\end{defi}
\begin{rmk}\label{rem:1}
	We can find a $\hat \varepsilon$-net of the $m$-Euclidean ball of size $\left(\frac{2}{\hat \varepsilon}+1\right)^m$ (c.f. \cite[Corollary 4.2.13]{vershyninbook}).
\end{rmk} 
In practice $\hat \varepsilon$-nets can be used to bound the operator norm, $\|M\|_2$, of a matrix $M$:
\begin{lem}[\phantom{}{\cite[Lemma 4.4.1, Exercise 4.4.3]{vershyninbook}}]\label{lem:epsnet}
	Let $M\in \mathbb{R}^{p \times q}$, then for any $\hat \varepsilon$-net $\mathcal{N}$ ($\hat \varepsilon<1$) of the unit sphere $S^{q-1}$ we have
	$$\sup\limits_{x\in \mathcal{N}}\|Mx\|_2 \le \|M\|_2 \le \frac{1}{1-\hat \varepsilon} \sup\limits_{x\in \mathcal{N}}\|Mx\|_2.$$ 
	Furthermore, for any $\hat \varepsilon$-net $\mathcal{N'}$ of $S^{p-1}$ (with $\hat \varepsilon<1/2$), we have
	$$\sup\limits_{x\in \mathcal{N},y \in \mathcal{N'}}\langle Mx,y \rangle \le \|M\|_2 \le \frac{1}{1-2\hat \varepsilon} \sup\limits_{x\in \mathcal{N},y \in \mathcal{N'}}\langle Mx,y \rangle.$$
	Moreover if $p=q$ and $M$ is symmetric, we have 
	$$\sup\limits_{x\in \mathcal{N}}|\langle Mx,x \rangle |\le \|M\|_2 \le \frac{1}{1-2\hat \varepsilon} \sup\limits_{x\in \mathcal{N}}|\langle Mx,x \rangle |.$$
\end{lem}

\section{Properties of random projection matrices}\label{sec:3}

In this section we recall the famous Johnson-Lindenstrauss lemma, which is generalized to sub-Gaussian distribution, and derive some new concentration properties for random matrices. 

\begin{lem}[Johnson-Lindentrauss Lemma (JLL) \cite{achlioptas}]\label{lem:jll0}
	Let $\mathcal{Z}$ be a set of $h$ points in $\mathbb{R}^l$ and let $G$ be a $k \times l$ random matrix whose entries are independent $\mathcal{N}(0,1)$ random variables,  let $0< \varepsilon < 1$. Then with probability $1-2h\exp(-k/2(\varepsilon^2/2-\varepsilon^3/3))$, we have that for all $z_i,z_j \in \mathcal{Z}$
	\begin{equation}\label{eq:jll}
	(1-\varepsilon)\|z_i-z_j\|^2_2 \le \frac{1}{\sqrt{k}}\|Gz_i-Gz_j\|^2_2 \le (1+\varepsilon)\|z_i-z_j\|^2_2,
	\end{equation}
\end{lem}
In the Johnson-Lindenstrauss Lemma, the term  $1-2h\exp(-k/2(\varepsilon^2/2-\varepsilon^3/3))$ means that we can choose $k =O\left(\frac{\log(h)}{\varepsilon^2/2-\varepsilon^3/3}\right)$ so  that Equation~\eqref{eq:jll} is satisfied with probability as small as we want. 

The following Lemma, proved in \cite{IPCO_RP}, enumerates some consequences of the JLL.
\begin{lem}[c.f. {\cite[Lemmas 3.1, 3.2, 3.3]{IPCO_RP}}] \label{jll-approx}  
	For $G$ defined as in Lemma~\ref{lem:jll0}, let $T=\frac{1}{\sqrt{k}}G$. 
	Let $0 < \varepsilon < 1$, then we have
	\begin{enumerate}[(i)]
		\item For any $x, y \in \mathbb{R}^l$,
		\[x^\top y - \varepsilon \|x\|\,\|y\| \le (Tx)^\top(Ty) \le x^\top y + \varepsilon \|x\|\,\|y\|\]
		with probability at least $1 - 4\exp(-k/2(\varepsilon^2/2-\varepsilon^3/3))$.
		\item For any $x\in \mathbb{R}^l$ and $A \in  \mathbb{R}^{p \times l}$  whose $i$th row is denoted by $A_i$,
		\[ 
		Ax - \varepsilon \|x\|  \begin{bmatrix}\|A_1\|_2 \\ \ldots \\ \|A_p\|_2\end{bmatrix}  \le 
		AT^{\top}Tx \le Ax + \varepsilon \|x\|  \begin{bmatrix}\|A_1\|_2 \\ \ldots \\ \|A_p\|_2\end{bmatrix} 
		\]
		with probability at least $1 - 4p\exp(-k/2(\varepsilon^2/2-\varepsilon^3/3))$.
		\item For any two vectors $x, y\in \mathbb{R}^l$ and a square matrix $Q \in  \mathbb{R}^{l \times l}$, then with probability at least $1 - 8r\exp(-k/2(\varepsilon^2/2-\varepsilon^3/3))$, we have
		\[
		x^{\top} Q y - 3\varepsilon \|x\|\, \|y\|\, \|Q\|_F \le x^{\top}T^{\top}TQT^{\top}Ty \le x^{\top} Q y  + 3\varepsilon \|x\|\, \|y\|\, \|Q\|_F,\]
		
		where $r$ is the rank of $Q$.
	\end{enumerate}
\end{lem}
A consequence of Lemma \ref{jll-approx} is that the random mapping $\mathbb{R}^{m \times m} \mapsto \mathbb{R}^{k \times k}:\ M \mapsto TMT^\top$ ``almost'' preserves the Frobenius norm, $\|M\|_{F}$, of $M$.
\begin{lem}\label{lem:matrixapprox}
	Let $G$ be defined as in Lemma \ref{lem:jll0} and let $T=\frac{1}{\sqrt{k}}G$. Let $0 < \varepsilon < 1$. Then for any $A,B \in \mathbb{R}^{l \times l}$, we have that with probability at least $1 - 8r_1r_2\exp(-k/2(\varepsilon^2/2-\varepsilon^3/3))$,
	\begin{equation}\label{eq:matrixscalarproduct}
	\left|\langle A, B \rangle_F - \left\langle TAT^\top, TBT^\top \right\rangle_F\right| \le 3\varepsilon \|A\|_F \|B\|_*,
	\end{equation}
	where $r_1,r_2$ are the ranks of $A$ and $B$, respectively. 
\end{lem} 
\begin{proof}
Assume first that $B$ has rank one, then there exists unit vectors $x,y \in \mathbb{R}^l$ and $\sigma >0$ such that $B=\sigma x y^\top$.
Then by Lemma \ref{jll-approx}($iii$), we have with probability at least $1 - 8r_1\exp(-k/2(\varepsilon^2/2-\varepsilon^3/3))$, 
\[	\sigma x^{\top} A y - 3\varepsilon \sigma\|x\|\, \|y\|\, \|A\|_F \le \sigma x^{\top}T^{\top}TAT^{\top}Ty \le  \sigma x^{\top} A y  + 3\varepsilon \sigma \|x\|\, \|y\|\, \|A\|_F.\]
Since \[ \sigma x^{\top} A y = \langle \sigma x y^\top, A \rangle_F = \langle B, A \rangle_F\]  \[\sigma x^{\top}T^{\top}TAT^{\top}Ty =  \langle \sigma Tx y^\top T^\top, TAT^\top \rangle_F = \langle TBT^\top, TAT^\top \rangle_F\] and $\sigma=\|B\|_*$, this proves the Lemma in the rank one case.

For the general case, we write, using the singular value decomposition of $B$,
$$B = \sum\limits_{i=1}^{r_2} \sigma_i x_i y_i^\top,$$
where $x_i,y_i \in \mathbb{R}^l$ are unit vectors and $\sigma_i >0$. 
We conclude by an union bound on $i\in \{1,\cdots,r_2\}$ (we use Lemma \ref{jll-approx}($iii$) for all $x_i,y_i$), using the linearity of the scalar product and the fact that $\|B\|_*= \sum\limits_{i=1}^{r_2} \sigma_i$. %$\blacksquare$
\end{proof}
Notice that in the above lemma, the nuclear norm of $A$ or $B$ should be taken into account in the approximation error. In \cite{bluhm2019}, it has been proven that such random mapping cannot preserve the Frobenius norm of a matrix in a similar fashion as the JLL (hence the error cannot be written as $O(\varepsilon \|A\|_F \|B\|_F)$). The approximation we obtain is tighter than the one obtain in \cite{bluhm2019} as our error is $O(\varepsilon \|A\|_F \|B\|_*)$ instead of $O(\varepsilon \|A\|_* \|B\|_*)$.

In the next Lemma, we prove a concentration result for random Gaussian matrices.
\begin{lem}\label{Zhang}
	Let $a \in \mathbb{R}^m_{++}$ and let $U$ be the random $k \times m$ matrix such that its $j$-th column is a random vector drawn, independently from the other columns, from the $\mathcal{N}(0,a_jI_k)$ distribution. Then
	for any $0 < \varepsilon \le 1$, $0 < \delta <\frac{1}{2}$  if 
	$$m \ge \frac{2^8}{\mathcal{C}_1\varepsilon^2}(3k-\ln(\delta)), $$
	then with probability at least $1 - 2\delta$, we have
	$$\left\|\frac{1}{\|a\|_1}UU^{\top} - I_k\right\|_2 \le \frac{\max a_i}{\min a_i}  \varepsilon.$$	
	%	provided that
	%	\begin{equation} \label{condition1}
	%	m \ge \frac{(k+1) \log (\frac{2k}{\delta})} {\mathcal{C}_1 \varepsilon^2},
	%	\end{equation}
	%	where $\|\;.\, \|_2$ is the spectral norm of the matrix and $\mathcal{C}_1 > \frac{1}{4}$ is some universal constant. 
\end{lem}
\begin{proof}
Let us denote, for all $j\le m$, by $A_j$ the $j$th column of $U$ multiplied by $\frac{1}{\sqrt{\|a\|_1}}$. Let us consider a $\frac{1}{4}$-net, $\mathcal{N}$, of the unit sphere $S^{k-1}$ such that $|\mathcal{N}| \le 9^k$ (c.f. Remark \ref{rem:1}).\\
Using Lemma \ref{lem:epsnet} with $\hat \varepsilon=\frac{1}{4}$ and the fact that the matrix $\frac{1}{\|a\|_1}UU^{\top} - I_k$ is symmetric, we deduce that
\begin{equation}\label{eq:lem_zhang:1}
\left\|\frac{1}{\|a\|_1}UU^{\top} - I_k \right\|_2 \le 2  \sup\limits_{x\in \mathcal{N}}\left| \left<\frac{1}{\|a\|_1}UU^{\top}x-x,x \right> \right| = 2 \sup\limits_{x\in \mathcal{N}}  \left| \left\|\frac{1}{\sqrt{\|a\|_1}}U^{\top} x\right\|^2_2 -1\right|.
\end{equation}
Let $x \in S^{k-1}$, we can express $\left\|\frac{1}{\sqrt{\|a\|_1}}U^{\top} x\right\|^2_2$ as a sum of independent random variables:
\begin{equation}\label{eq:1}
\left\|\frac{1}{\sqrt{\|a\|_1}}U^{\top} x\right\|^2_2 = \sum\limits_{i=1}^m \langle A_i ,x \rangle ^2,
\end{equation}
where the $A_i$ are independent sub-Gaussian vectors distributed under the $\mathcal{N}(0,\frac{a_i}{\|a\|_1}I_k)$ distribution for every $i$. Thus, by Remark \ref{rem:2}, $X_i=\langle A_i ,x \rangle$	are independent sub-Gaussian  random variables with $$E(X_i^2)=\frac{a_i}{\|a\|_1}  \mbox{ and } \subgaus{X_i} \le 2 \sqrt{\frac{a_i}{\|a\|_1}}.$$
Therefore, $Y_i = X_i^2-\frac{a_i}{\|a\|_1}$ are independent zero means, sub-exponential random variables and by Lemma~
\ref{lem:1} we have 
$$ \subexp{X_i^2} \le \subgaus{X_i}\subgaus{X_i} \le 2^2\frac{a_i}{\|a\|_1},$$
hence by Lemma \ref{lem:1} again,
$$\subexp{X_i^2-\frac{a_i}{\|a\|_1}}\le 2\subexp{X_i^2}  \le 2^3\frac{a_i}{\|a\|_1}. $$
Notice that $\sum\limits_{i=1}^m \frac{a_i}{\|a\|_1}=1$.
Using Bernstein inequality \eqref{eq:bern} and \eqref{eq:1},  we obtain
\begin{align}
P\left(\left| \left\|\frac{1}{\sqrt{\|a\|_1}}U^{\top} x\right\|^2_2 -1\right| \ge \frac{1}{2}\frac{\max a_i}{\min a_i}\varepsilon\right) &= P\left( \left|\sum\limits_{i=1}^m(X_i^2-\frac{a_i}{\|a\|_1})\right| \ge \frac{1}{2}\frac{\max a_i}{\min a_i} \varepsilon\right) \notag\\
& \le 2\exp\left(-\mathcal{C}_1\min\left(\frac{\left(\frac{\max a_i}{\min a_i}\varepsilon\right)^2}{4\sum\limits_{i=1}^m \left(2^3\frac{a_i}{\|a\|_1}\right)^2}, \frac{\frac{\max a_i}{\min a_i}\varepsilon}{2\max\limits_i \left(2^3\frac{a_i}{\|a\|_1}\right) }\right) \right). \label{eq:puai}
\end{align}
In order to bound \eqref{eq:puai}, we use  the following inequality with $a \in \mathbb{R}^m_{++}$:
\begin{align}
\sum\limits_{i=1}^m \left(2^3\frac{a_i}{\|a\|_1}\right)^2 &= 2^6 \frac{\sum\limits_i a_i^2}{\sum\limits_i a_i^2 + 2\sum\limits_{i < j} a_ia_j} \notag \\
&\le 2^6\frac{m (\max a_i)^2}{m (\min a_i)^2+(m-1)m(\min a_i)^2} \notag \\
& = 2^6 \frac{(\max a_i)^2}{m(\min a_i)^2}. \label{eq:cma}
\end{align}
Then, using \eqref{eq:cma}, the fact that $\max\limits_i \left(2^3\frac{a_i}{\|a\|_1}\right) \le 2^3 \frac{\max\limits_i a_i}{m \min\limits_i a_i} $
we plug all those bounds in \eqref{eq:puai} to obtain
\begin{align*}
P\left(\left| \left\|\frac{1}{\sqrt{\|a\|_1}}U^{\top} x\right\|^2_2 -1 \right| \ge \frac{1}{2}\frac{\max a_i}{\min a_i}\varepsilon \right) &\le 2 \exp\left(-\mathcal{C}_1\min\left(\frac{1}{2^8}\varepsilon^2m, \frac{1}{2^4}\varepsilon m\right) \right) \\
&\le 2\exp\left(-\frac{\mathcal{C}_1}{2^8}\varepsilon^2 m\right).
\end{align*}
Now using an union bound on the set $\mathcal{N}$, we have that
\begin{align*}
P\left(\left| \sup\limits_{x\in \mathcal{N}}\left\|\frac{1}{\sqrt{\|a\|_1}}U^{\top} x\right\|^2_2 -1\right|\ge  \frac{1}{2}\frac{\max a_i}{\min a_i}\varepsilon \right) & \le 2* 9^k \exp\left(-\frac{\mathcal{C}_1}{2^8}\varepsilon^2 m\right) \\
&   \le 2\exp \left(3k-\frac{\mathcal{C}_1}{2^8}\varepsilon^2 m \right).  
\end{align*}
From \eqref{eq:lem_zhang:1} we have
$$P\left( \left\|\frac{1}{\|a\|_1}UU^{\top} - I_k \right\|_2 \ge \frac{\max a_i}{\min a_i} \varepsilon \right) \le P\left(\left| \sup\limits_{x\in \mathcal{N}}\left\|\frac{1}{\sqrt{\|a\|_1}}U^{\top} x\right\|^2_2 -1\right|\ge  \frac{1}{2}\frac{\max a_i}{\min a_i}\varepsilon \right).$$
%Hence, from \eqref{eq:lem_zhang:1} and
%Furthermore, using \eqref{eq:lem_zhang:2}, we have 
%\begin{align*}
%%  P\left( \left\|\frac{1}{\|a\|_1}UU^{\top} - I_k \right\|_2 \ge \frac{\max a_i}{\min a_i} \varepsilon \right) &\le
%  P\left(\left| \sup\limits_{x\in \mathcal{N}}\left\|\frac{1}{\sqrt{\|a\|_1}}U^{\top} x\right\|^2_2 -1\right|\ge  \frac{1}{2}\frac{\max a_i}{\min a_i}\varepsilon \right)  
% \le 2\exp \left(3k-\frac{\mathcal{C}_1}{4\mathcal{C}^6}\varepsilon^2 m \right).  
%\end{align*}
Hence, $$P\left( \left\|\frac{1}{\|a\|_1}UU^{\top} - I_k \right\|_2 \ge \frac{\max a_i}{\min a_i}\varepsilon \right)  \le 2\exp \left(3k-\frac{\mathcal{C}_1}{2^8}\varepsilon^2 m \right) \le 2\exp(\ln(\delta))=2\delta .$$  %$\blacksquare$
\end{proof}
In the above Lemma, we proved that $\frac{1}{\|a\|_1}UU^{\top}$ concentrates around its expectation, $\mathbb{E} \left(\frac{1}{\|a\|_1}UU^{\top} \right) = I_k$. This is a generalization of a result proved in \cite{zhang13} about concentration of Gaussian random matrices. 
%Notice that the term $\frac{\max a_i}{\min a_i}$ is necessary. To see this, we notice that without $\frac{\max a_i}{\min a_i}$, we could prove that the Lemma holds when $k=m$. This however can not be true as the measure concentration fails in the case $k=m$. For the sake of obtaining a contradiction, suppose that the lemma holds without the $\frac{\max a_i}{\min a_i}$ term. We consider a sequence of matrices $(U_n)_{n \in \mathbb{N}}$ together with a sequence $(a_j^n)_{n \in \mathbb{N}}$ of elements of $\mathbb{R}^m_{++}$ such that for all $j> k$, $(a_j^n)_{n \in \mathbb{N}}$, tends to $0$, and $(a_i^n)_{n \in \mathbb{N}}$ are constant sequences for $i\le k$. As $n$ tends to infinity, the sequence $(U_nU_n^{\top})_{n \in \mathbb{N}}$ tends to a squared $k\times k$ matrix and for each $n$ the condition of the lemma are fulfilled by $U_n$ and $(a_1^n,\cdots,a^n_m)$, which leads to a contradiction. 
\begin{cor}\label{cor:2}
	Let $a \in \mathbb{R}^m_{++}$ and let $U$ be a random Gaussian $k \times m$ matrix such that its $j$-th column is  drawn, independently from the $\mathcal{N}(0,I_k)$ distribution, then
	for any $0 < \varepsilon \le 1$, $0 < \delta <\frac{1}{2}$ if 
	$$m \ge \frac{2^8}{\mathcal{C}_1\varepsilon^2}(3k-\ln(\delta)),$$
	then with probability at least $1 - 2\delta$, we have
	$$\left\|UD(a)U^{\top} - \|a\|_1I_k\right\|_2 \le \frac{\max a_i}{\min a_i} {\|a\|_1} \varepsilon,$$
	where $D(a)$ denotes the $m\times m$ diagonal matrix built from the vector $a$. 
\end{cor}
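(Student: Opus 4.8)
The plan is to deduce this directly from Lemma~\ref{Zhang} via a deterministic column rescaling, so that no new probabilistic argument is needed. Writing $u_1,\dots,u_m$ for the columns of $U$, each $u_j$ is an independent $\mathcal{N}(0,I_k)$ vector by hypothesis. I would introduce the rescaled matrix $\tilde U = U D(\hadsqrt{a})$, where $D(\hadsqrt{a})$ is the diagonal matrix with $j$th diagonal entry $\sqrt{a_j}$; since right multiplication by a diagonal matrix scales columns, the $j$th column of $\tilde U$ is exactly $\sqrt{a_j}\,u_j$.

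First I would verify that $\tilde U$ meets the hypotheses of Lemma~\ref{Zhang}. Because $u_j\sim\mathcal{N}(0,I_k)$ and $\sqrt{a_j}$ is a deterministic positive scalar, the column $\sqrt{a_j}\,u_j$ is distributed as $\mathcal{N}(0,a_j I_k)$, and the columns stay mutually independent since each is rescaled on its own. Thus $\tilde U$ is precisely the random matrix of Lemma~\ref{Zhang} associated with the same vector $a\in\mathbb{R}^m_{++}$. Next I would record the algebraic identity $\tilde U \tilde U^\top = U D(\hadsqrt{a})^2 U^\top = U D(a) U^\top$, valid because $D(\hadsqrt{a})$ is symmetric and $D(\hadsqrt{a})^2=D(a)$.

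With these two observations in place, I would apply Lemma~\ref{Zhang} to $\tilde U$ under the stated condition $m\ge \frac{4\mathcal{C}^6}{\mathcal{C}_1\varepsilon^2}(3k-\ln(\delta))$, obtaining with probability at least $1-2\delta$ that
\[
\left\|\frac{1}{\|a\|_1}U D(a) U^\top - I_k\right\|_2 \le \frac{\max a_i}{\min a_i}\,\varepsilon ,
\]
and then multiply both sides by $\|a\|_1>0$ to reach the claimed bound. There is no genuine obstacle here, as the whole statement is a substitution into an already-proved lemma; the only step deserving care is the distributional bookkeeping in the first paragraph, namely checking that the per-column scaling turns the standard Gaussian columns of $U$ into exactly the $\mathcal{N}(0,a_j I_k)$ columns (with independence preserved) required by Lemma~\ref{Zhang}, so that its conclusion transfers without modification.
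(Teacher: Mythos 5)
Your proof is correct and follows essentially the same route as the paper's: the paper likewise forms the column-rescaled matrix (written there as $U'=\sqrt{D(a)}U$, intended as scaling the $j$th column by $\sqrt{a_j}$), notes its columns are independent $\mathcal{N}(0,a_jI_k)$ vectors, applies Lemma~\ref{Zhang}, and multiplies through by $\|a\|_1$. Your write-up is in fact slightly cleaner, since writing the rescaling as right multiplication $UD(\hadsqrt{a})$ makes the dimensions and the identity $\tilde U\tilde U^\top=UD(a)U^\top$ transparent.
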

\begin{proof}
Let $U'$ be the matrix $U'=\sqrt{D(a)}U$. The $j$th column of $U'$ follows the $\mathcal{N}(0,a_jI_k)$ distribution, hence, by Lemma~\ref{Zhang}, with probability at least $1 - 2\delta$, we have
$$\left\|\frac{1}{\|a\|_1}U'U'^{\top} - I_k\right\|_2 \le \frac{\max a_i}{\min a_i}  \varepsilon,$$
which ends the proofs after multiplying both sides by $\|a\|_1$. 
\end{proof}

\section{The projected problem}\label{sec:4}
In this section, we will analyze randomly projected versions of the conic optimization problem discussed in Section~\ref{sec:int}. However, there are a number of technical assumptions we need to impose and their degree of restrictiveness vary.
Strictly speaking, the problems for which our results are valid must have the following shape
\begin{equation}\label{eq:lpg2}
\mathcal{P}^{\mathcal{K}}\left\{\begin{array}{llll}
\min\limits_x & \transpose{{c}}x  \\
& Ax- {b} \in \mathcal{K} \\
& Bx -d \in \mathcal{K}' \\
& x \in \mathbb{R}^n,
\end{array}\right.
\end{equation}
where $\mathcal{K} = \mathbb{R}^m_{+} \times \PSDcone{p_1}\times \cdots \times \PSDcone{p_l}$ and $\mathcal{K}'$ is some arbitrary self-dual cone\footnote{$\mathcal{K}'$ is self-dual if and only if 
	$\mathcal{K}' = \{u \mid \transpose{{u}}v \geq 0, \forall v \in \mathcal{K}' \}$. }.
In the remaining of the paper, an element $y\in \mathcal{K}$ will be denoted by $y=(y_0,M_1,\cdots,M_l)$. %In order to obtain bounds on the value of $v(\mathcal{P}_Q^\mathcal{C})$, we need to make some assumptions on $\mathcal{P}^\mathcal{C}$. 

\begin{hyp}\label{hyp:0}
	We make the following assumptions on \eqref{eq:lpg2}.
	\begin{enumerate}[$(i)$]
		\item $\mathcal{K}'$ is a self-dual cone and the set $\{x\ |\ Bx -d \in \mathcal{K}'\}$ is non-empty such that $\min\limits_{ Bx -d \in \mathcal{K}'} c^\top x$ has finite value (for example, if $c\ge 0$ we can consider the set $\{x \in \mathbb{R}^n | x\ge 0  \}$). \label{hyp:1}
		\item For all $i\in \{1,\cdots,n\}$, $c_i\neq 0$. \label{hyp:2}
		\item All the $l,m,p_1,\cdots,p_l$ are all big-O of $n$. \label{hyp:3}
		\item The optimal value of \eqref{eq:lpg2} and its dual
		coincide. In addition, both problems have optimal solutions. \label{hyp:4}
		\item There exists an optimal solution $(y^*,\lambda^*)$ to the dual problem of \eqref{eq:lpg2}, where $y^*=(y_0^*,M_1^*,\cdots,M_l^*)$, such that ${y^*_0}_j \neq 0$,
		for all $j \in \{1,\cdots,m\}$.  \label{hyp:5}
		\item The optimal value, $v(\mathcal{P}^{\mathcal{K}})$, of $\mathcal{P}^{\mathcal{K}}$ is non-zero. \label{hyp:6}
	\end{enumerate}
\end{hyp}

\begin{rmk}
	Notice that (\ref{hyp:2}), (\ref{hyp:4}) hold generically (c.f. \cite{PT01,dur2017}) and (\ref{hyp:6}) also holds generically. 
	Furthermore, 
	regarding item $(\ref{hyp:5})$, in practice we can always consider a point $\tilde{y}^*$ in a neighborhood of $y^*$, instead of $y^*$, such that $|b^\top y^* -b^\top \tilde{y}^*|\le \eps$.	
	As for  (\ref{hyp:6}) it is not needed if we consider error bounds in absolute value instead of relative value with respect to $v(\mathcal{P}^{\mathcal{K}})$. In any case, we can perturb the vector $c$ by a random quantity $\delta c$ of small variance to ensure that the hypothesis holds.\\
	As for (\ref{hyp:1}) it is indeed necessary to prove that the projected problem is bounded.
\end{rmk}

Let us consider the following random map:
\[\mathcal{Q}:\ \mathbb{R}^m \times \S^{p_1}  \times \cdots \times \S^{p_l}  \mapsto \mathbb{R}^k \times \S^{q_1}  \times \cdots \times \S^{q_l} .\]
We have
\begin{equation}\label{eq:def_Q}
\mathcal{Q}((y_0,M_1,\cdots,M_l))=(Sy_0,Q_1(M_1),\cdots,Q_l(M_l)),
\end{equation}
where $S \in \mathbb{R}^{k \times m}$ is a random \textit{iid} matrix such that  $S=T\circ T$ where $\circ$ denotes the Hadamard product and where $T_{ij}$ is drawn from the  normal distribution $\mathcal{N}(0,\frac{1}{k})$. Furthermore, we have
\begin{equation}\label{eq:def_Qi}
Q_i(M_i)={T^{(i)}} M_i {T^{(i)}}^\top,
\end{equation}
where for all $i \in \{1,\cdots,l\}$, ${T^{(i)}} \in \mathbb{R}^{q_i \times p_i}$ are random \textit{iid} matrices such that each entry is drawn independently from $\mathcal{N}\left(0,\frac{1}{q_i}\right)$. \\
Notice that $\mathcal{Q}$ and $\mathcal{K}=\mathbb{R}^m_{+} \times \PSDcone{p_1}\times \cdots \times \PSDcone{p_l}$ satisfy \[\mathcal{Q}(\mathcal{K})=\mathbb{R}^k_{+} \times \PSDcone{q_1}\times \cdots \times \PSDcone{q_l},\] which is also the product of a non-negative orthant and positive semidefinite cones.

\begin{rmk}
	  While it is true that LP \eqref{eq:lp} can be written as an SDP,  the method proposed in \cite{bluhm2019} would not be efficient if applied to \eqref{eq:lp}. This is because the resulting projected problem is an SDP which would not be reducible to an LP again.
	Hence, %in order to solve the projected problem,
	we would need to solve the projected problem as an SDP. 
\end{rmk}

%\noindent
We consider the following projected problem:
\begin{equation}\label{eq:lpgt2}
\mathcal{P}^{\mathcal{K}}_Q\left\{\begin{array}{llll}
\min\limits_x & \transpose{{c}}x  \\
& \mathcal{Q}(A)x - \mathcal{Q}({b}) \in \mathcal{Q}(\mathcal{K}) \\
& Bx -d \in \mathcal{K}' \\
& x \in \mathbb{R}^n,
\end{array}\right.
\end{equation}
where $\mathcal{Q}(A)$ is the matrix whose columns are $\mathcal{Q}(A_i)$ where $A_i$ is the $i$th column of $A$. Notice that $\mathcal{P}^{\mathcal{K}}_Q$ is a relaxation of $\mathcal{P}^{\mathcal{K}}$, hence 
$$v(\mathcal{P}^{\mathcal{K}}_Q) \le v(\mathcal{P}^{\mathcal{K}}).$$
We now derive a lower bound for the value of $\mathcal{P}^{\mathcal{K}}_Q$.

%\noindent
Let us consider the duals, $\mathcal{D}^{\mathcal{K}}$ of \eqref{eq:lpg2} and $\mathcal{D}^{\mathcal{K}}_\mathcal{Q}$ of \eqref{eq:lpgt2}:
\begin{equation}
\left.\begin{array}{llll}
\max\limits_{y,\lambda} & \transpose{b} y +\transpose{d} \lambda && \\
& A^\top y + B^\top \lambda &=& c \\
& y \in \mathcal{K}, \lambda \in \mathcal{K}'
\end{array}\right\} \mathcal{D}^{\mathcal{K}}\label{eq:d}
\end{equation}
\begin{equation}
\left.\begin{array}{llll}
\max\limits_{z,\lambda} & \transpose{(\mathcal{Q}(b))}z  +\transpose{d} \lambda && \\
& A^\top \mathcal{Q}^\top(z) +B^\top \lambda  &=& c \\
& z \in \mathcal{Q}(\mathcal{K}), \lambda \in \mathcal{K}'
\end{array}\right\} \mathcal{D}^{\mathcal{K}}_\mathcal{Q}\label{eq:dt}
\end{equation}
%\begin{equation}
%\left.\begin{array}{rrcl}
%\max\limits_{y \in \mathbb{R}^k} & \transpose{b} d(T^\top D(y)T) && \\
%& A^\top  d(T^\top D(y)T) &=& c \
%& y &\ge & 0
%\end{array}\right\} D_T\label{eq:dt}
%\end{equation}
where $\mathcal{Q}^\top$ denotes the dual of the map $\mathcal{Q}$. Let $(y^*,\lambda^*) \in \mathcal{K}\times \mathcal{K}'$ be an optimal solution of \eqref{eq:d}. We consider the following ``approximated'' projected solution, $(z_Q,\lambda^*)$, where
\begin{equation}\label{eq:zQ}
z_Q :=  \mathcal{Q}(y^*)  \in \mathcal{Q}(\mathcal{K}).
\end{equation}

We will now prove that $(z_Q,\lambda^*)$ is ``almost'' feasible for $\eqref{eq:dt}$.
%\noindent
Let us consider the modified dual problem:
\begin{equation} 
\left.\begin{array}{llll}
\max\limits_{z,\lambda} & \transpose{({\mathcal{Q}}(b))}z +\transpose{d} \lambda  && \\
& A^\top \mathcal{Q}^\top(z) +B^\top \lambda  &=& c + A^\top(\mathcal{Q}^\top(z_Q) - y^*)\\
& z \in \mathcal{Q}(\mathcal{K}), \lambda \in \mathcal{K}'
\end{array}\right\} \mathcal{D}^\eps_Q\label{eq:dteps}
\end{equation}
Notice that by definition of $\mathcal{D}^\eps_Q$, $(z_Q,\lambda)$ is a feasible solution for \eqref{eq:dteps}. We will now prove that $\mathcal{Q}^\top(z_Q)=\mathcal{Q}^\top(\mathcal{Q}(y^*))$ is ``close'' to $y^*$, which will be enough to obtain a lower bound on $v(\mathcal{P}_\mathcal{Q}^\mathcal{K})$ in Theorem \ref{thm:1}.
Let $E \in \mathbb{R}^n$ denote the ``error'': 
\begin{equation}\label{eq:E}
E:=A^\top(\mathcal{Q}^\top(z_Q) - y^*)= A^\top \left(\mathcal{Q}^\top(\mathcal{Q}(y^*)) -y^*\right).
\end{equation}

\subsection{Bounding the error $E$}
Let us write $y^*=(y^*_0,M^*_1,\cdots,M^*_l)\in \mathbb{R}^m \times \S^{p_1} \times \cdots \times \S^{p_l} $. We have that $$\mathcal{Q}^\top(\mathcal{Q}(y^*))=\left(S^\top Sy^*_0,{T^{(1)}}^\top {T^{(1)}} M_1^* {T^{(1)}}^\top {T^{(1)}},\cdots , {T^{(l)}}^\top {T^{(l)}} M_l^* {T^{(l)}}^\top {T^{(l)}}\right),$$ hence
\begin{equation}\label{eq:2}
\mathcal{Q}^\top(z_Q) - y^*= \begin{pmatrix}
S^\top Sy^*_0 - y^*_0 \\
{T^{(1)}}^\top {T^{(1)}} M_1^* {T^{(1)}}^\top {T^{(1)}} - M_1^* \\
\vdots \\
{T^{(l)}}^\top {T^{(l)}} M_l^* {T^{(l)}}^\top {T^{(l)}} - M_l^*
\end{pmatrix}.
\end{equation}
Let 
\begin{equation}\label{eq:5}
A^\top= \begin{pmatrix}
A^{(0)} & A^{(1)} & \cdots & A^{(l)}
\end{pmatrix}^\top
\end{equation}
be the column decomposition of $A^\top$ such that for all $y=(y_0,M_1,\cdots,M_l)$, and hence, 
\begin{equation}\label{eq:decomp}
A^\top y = {A^{(0)}}^\top y_0 + \sum\limits_{i=1}^l {A^{(i)}}^\top M_i,
\end{equation}
where $A^{(0)} \in \mathbb{R}^{m \times n}$ and $A^{(i)} \in \mathbb{R}^{p_i^2 \times n }$ for $1\le i \le l$. Here we use the notation ${A^{(i)}}^\top M_i$ to denote the vector in $\mathbb{R}^n$ whose $j$th component is given by $ \langle A^{(i)}_j, M_i \rangle $, where the $p_i \times p_i$ matrix $A^{(i)}_j$ is seen as the $j$th column of $A^{(i)}$.\\
Using \eqref{eq:decomp}, $E$ in \eqref{eq:E} can be written as
\begin{equation}\label{eq:error2}
E= {A^{(0)}}^\top (S^\top Sy^*_0 - y^*_0) + \sum\limits_{i=1}^l {A^{(i)}}^\top \left({T^{(i)}}^\top {T^{(i)}} M_i^* {T^{(i)}}^\top {T^{(i)}} - M_i^*    \right).
\end{equation}
The goal of this subsection is to prove the following proposition.
\begin{prop}\label{prop:2}
	Let $\varepsilon,\delta,m$ be such that  $0 <\delta < \frac{1}{8} $,  $0 < \varepsilon < 1$ and
	$m \ge \frac{2^8}{\mathcal{C}_1\varepsilon^2}(3k+ \ln(n)-\ln(\delta)) $.
	Then, 	with probability at least $1-8\delta - (8m^2 +4m)(n+1)\exp(-k/2(\varepsilon^2/2-\varepsilon^3/3))-\sum\limits_{i=1}^l 8p_i^2(n+1) \exp({-q_i/2(\varepsilon^2/2-\varepsilon^3/3)})$,
	\begin{gather}
	|E| \le \varepsilon \alpha(y^*_0,A^{(0)}) \begin{pmatrix}
	\|A^{(0)}_1\|_2 \| y^*_0\|_2 \\
	\vdots \\
	\|A^{(0)}_n\|_2 \| y^*_0\|_2
	\end{pmatrix} + 3 \varepsilon\left(\max\limits_{i=1,\cdots,l} \frac{\|M^*_i\|_*}{\|M^*_i\|_F} \right) \sum\limits_{i=1}^l  \|M_i^*\|_F \begin{pmatrix}
	\|A^{(i)}_1\|_F \\
	\vdots \\
	\|A^{(i)}_n\|_F
	\end{pmatrix},\label{eq:7}\\
	|\transpose{(Q(b))}z_Q - b^\top y^*|  \le \varepsilon \alpha(y^*_0,b_0) \|b_0\|_2 \|y^*_0\|_2 + 3 \varepsilon \left(\max\limits_{i=1,\cdots,l} \frac{\|M^*_i\|_*}{\|M^*_i\|_F} \right)\sum\limits_{i=1}^l  \|b_i\|_F\|M_i^*\|_F, \label{eq:8}	
	\end{gather}
	where ${A^{(i)}_j}$ denotes the $j$th column of ${A^{(i)}}$, where $|E|$ is the vector whose components are the absolute value of the components of $E$ and where
	\begin{equation}\label{alphadef}
	\alpha(y^*_0,A^{(0)})  =  16 \left(\max_j\frac{\|A^{(0)}_j\|_1}{k\|A^{(0)}_j\|_2} \left(\frac{\|y^*_0\|_1}{\|y^*_0\|_2}\frac{\max|{y^*_0}_i|}{\min |{y^*_0}_i|}(1+\varepsilon) +k \right) \right),
	\end{equation}
	\begin{equation}\label{alphadef2}
	\alpha(y^*_0,b_0)  =  16 \left(\frac{\|b_0\|_1}{k\|b_0\|_2} \left(\frac{\|y^*_0\|_1}{\|y^*_0\|_2}\frac{\max|{y^*_0}_i|}{\min |{y^*_0}_i|}(1+\varepsilon) +k \right) \right) 
	\end{equation}
	and $b=(b_0,b_1,\cdots,b_l)$.
	
\end{prop}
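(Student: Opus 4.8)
The plan is to bound the two groups of terms in the decomposition \eqref{eq:error2} separately — the semidefinite blocks ${A^{(i)}}^\top({T^{(i)}}^\top {T^{(i)}} M_i^* {T^{(i)}}^\top {T^{(i)}} - M_i^*)$ and the orthant block ${A^{(0)}}^\top(S^\top S y^*_0 - y^*_0)$ — and then to combine the resulting tail estimates by a union bound. I would treat \eqref{eq:8} as the special case of \eqref{eq:7} in which the column $A^{(0)}_j$ is replaced by $b_0$ and $A^{(i)}_j$ by $b_i$ (note $\transpose{(\mathcal{Q}(b))}z_Q = \langle Sb_0,Sy^*_0\rangle + \sum_i\langle {T^{(i)}}b_i{T^{(i)}}^\top, {T^{(i)}}M_i^*{T^{(i)}}^\top\rangle_F$); this is why every failure probability below carries the factor $(n+1)$ rather than $n$, the extra unit accounting for the $b$-term.

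For the semidefinite blocks I would argue componentwise. The $j$-th entry of ${A^{(i)}}^\top({T^{(i)}}^\top {T^{(i)}} M_i^* {T^{(i)}}^\top {T^{(i)}} - M_i^*)$ is exactly $\langle A^{(i)}_j, {T^{(i)}}^\top {T^{(i)}} M_i^* {T^{(i)}}^\top {T^{(i)}} - M_i^*\rangle_F$, which is the quantity controlled by Lemma \ref{lem:matrixapprox} applied with $A=A^{(i)}_j$ and $B=M_i^*$: since both matrices have rank at most $p_i$, it is at most $3\varepsilon\|A^{(i)}_j\|_F\|M_i^*\|_*$ with failure probability at most $8p_i^2\exp(-\mathcal{C}_0\varepsilon^2 q_i)$. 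Writing $\|M_i^*\|_* = \frac{\|M_i^*\|_*}{\|M_i^*\|_F}\|M_i^*\|_F \le \big(\max_i\frac{\|M_i^*\|_*}{\|M_i^*\|_F}\big)\|M_i^*\|_F$ turns this into the second summand of \eqref{eq:7} and of \eqref{eq:8}, and a union bound over $i\in\{1,\dots,l\}$, over $j\in\{1,\dots,n\}$, and over the $b$-term produces the term $\sum_i 8p_i^2(n+1)\exp(-\mathcal{C}_0\varepsilon^2 q_i)$.

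The orthant block is the crux, and the difficulty is precisely that $S=T\circ T$ has non-negative entries with nonzero mean and therefore does \emph{not} enjoy the Johnson–Lindenstrauss property, so neither Lemma \ref{lem:matrixapprox} nor Lemma \ref{jll-approx} applies to $S$ directly; the idea is to route the analysis through the Gaussian factor $T$. Since the $i$-th coordinate of $Sy^*_0$ equals $(TD(y^*_0)T^\top)_{ii}$ and $y^*_0\in\mathbb{R}^m_{++}$ by Assumption \ref{hyp:0}$(\ref{hyp:5})$, Lemma \ref{Zhang} (equivalently Corollary \ref{cor:2}) applies with $a=y^*_0$ and shows $\big\|\tfrac{k}{\|y^*_0\|_1}TD(y^*_0)T^\top - I_k\big\|_2 \le \frac{\max_i {y^*_0}_i}{\min_i {y^*_0}_i}\varepsilon$; read off on the diagonal this gives, coordinatewise, $\big|(Sy^*_0)_i - \tfrac{\|y^*_0\|_1}{k}\big| \le \tfrac{\|y^*_0\|_1}{k}\frac{\max_i {y^*_0}_i}{\min_i {y^*_0}_i}\varepsilon$, which is the source of the anisotropy factor $\frac{\max|{y^*_0}_i|}{\min|{y^*_0}_i|}$ and the $(1+\varepsilon)$ factors in $\alpha$. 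I would then expand the $j$-th entry as $\langle A^{(0)}_j, S^\top S y^*_0 - y^*_0\rangle = (SA^{(0)}_j)^\top(Sy^*_0) - (A^{(0)}_j)^\top y^*_0$ along $Sy^*_0 = \tfrac{\|y^*_0\|_1}{k}\mathbf{1} + w$, obtaining $\tfrac{\|y^*_0\|_1}{k}\mathbf{1}^\top(SA^{(0)}_j) + (SA^{(0)}_j)^\top w - (A^{(0)}_j)^\top y^*_0$. Here $\mathbf{1}^\top SA^{(0)}_j = \sum_r (A^{(0)}_j)_r\|T_{\cdot r}\|_2^2$ and $\|SA^{(0)}_j\|_1 \le \sum_r |(A^{(0)}_j)_r|\,\|T_{\cdot r}\|_2^2$ are controlled by the diagonal case of Lemma \ref{jll-approx}$(i)$ (namely $\|T_{\cdot r}\|_2^2\in[1-\varepsilon,1+\varepsilon]$), while the cross term obeys $|(SA^{(0)}_j)^\top w| \le \|SA^{(0)}_j\|_1\|w\|_\infty$. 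Collecting these yields a bound of the shape of the first summand of \eqref{eq:7}, with the ratio $\frac{\|A^{(0)}_j\|_1}{k\|A^{(0)}_j\|_2}$ coming from the column-norm concentration and the additive $k$ inside \eqref{alphadef} coming from the leading $\tfrac{\|y^*_0\|_1}{k}\mathbf{1}^\top(SA^{(0)}_j)$ contribution; the column-norm and bilinear estimates of Lemma \ref{jll-approx}, applied to the $m$-dimensional objects and union-bounded over the $n+1$ columns, account for the $(8m^2+2m)(n+1)\exp(-\mathcal{C}_0\varepsilon^2 k)$ term, and the several invocations of Lemma \ref{Zhang}/Corollary \ref{cor:2} (for $y^*_0$ and the auxiliary vectors entering \eqref{eq:7} and \eqref{eq:8}) account for the $8\delta$.

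The hard part will be the bookkeeping of the orthant block: keeping the \emph{non-vanishing} leading contribution $\tfrac{\|y^*_0\|_1}{k}\mathbf{1}^\top(SA^{(0)}_j)$ under control while matching the precise constants and the $(1+\varepsilon)^2$ shape of $\alpha(y^*_0,A^{(0)})$ in \eqref{alphadef} and of $\alpha(y^*_0,b_0)$ in \eqref{alphadef2}, and correctly assembling the many independent tail events — the $\chi^2$-type event of Lemma \ref{Zhang}, the $m$ column-norm events of Lemma \ref{jll-approx}, and the $l$ semidefinite events of Lemma \ref{lem:matrixapprox} — into the stated probability. A final union bound over all these events, valid once $m \ge \frac{4\mathcal{C}^6}{\mathcal{C}_1\varepsilon^2}(3k+\ln(n)-\ln(\delta))$ guarantees the hypothesis of Lemma \ref{Zhang} with enough room for the union over the $n+1$ vectors, then delivers the claimed estimate.
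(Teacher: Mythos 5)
Your treatment of the semidefinite blocks (componentwise application of Lemma \ref{lem:matrixapprox} to $\langle A^{(i)}_j,\cdot\rangle_F$, the rewriting $\|M_i^*\|_* \le \bigl(\max_i \|M_i^*\|_*/\|M_i^*\|_F\bigr)\|M_i^*\|_F$, and the union bound over $i$, $j$ and the $b$-term) is exactly the paper's argument, as is reading \eqref{eq:8} as an ``$(n+1)$-st column''. The genuine gap is in the orthant block, which you yourself call the crux. Writing $Sy^*_0 = \tfrac{\|y^*_0\|_1}{k}\mathbf{1} + w$ and then bounding the cross term by H\"older, $|(SA^{(0)}_j)^\top w| \le \|SA^{(0)}_j\|_1\|w\|_\infty$, destroys precisely the part of $(SA^{(0)}_j)^\top(Sy^*_0)$ that carries the signal $(A^{(0)}_j)^\top y^*_0$. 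What your two estimates prove is that $(SA^{(0)}_j)^\top(Sy^*_0)$ lies within $O(\varepsilon)\cdot(\cdots)$ of the \emph{flat} center $\tfrac{\|y^*_0\|_1}{k}\,\mathbf{1}^\top A^{(0)}_j$, not of $(A^{(0)}_j)^\top y^*_0$. The signal sits only in the correlation between the fluctuations of $SA^{(0)}_j$ and those of $Sy^*_0$: since $E[S_{ir}S_{is}]$ equals $1/k^2$ off the diagonal but $3/k^2$ on it, one has $E\bigl[(Sa)^\top(Sy)\bigr] = \bigl((\mathbf{1}^\top a)(\mathbf{1}^\top y) + 2\,a^\top y\bigr)/k$, so the inner product $a^\top y$ enters only through the diagonal correlations --- exactly what an absolute-value bound on $(SA^{(0)}_j)^\top w$ erases. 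The residual deterministic discrepancy $\tfrac{\|y^*_0\|_1(\mathbf{1}^\top A^{(0)}_j)}{k} - (A^{(0)}_j)^\top y^*_0$ carries no factor of $\varepsilon$ and cannot be absorbed into $\varepsilon\,\alpha(y^*_0,A^{(0)})\|A^{(0)}_j\|_2\|y^*_0\|_2$: for $A^{(0)}_j = y^*_0 = \mathbf{1}$ the discrepancy is of order $m^2/k$, while the corresponding component of the right-hand side of \eqref{eq:7} is $16\varepsilon\bigl(\tfrac{m^2}{k}(1+\varepsilon)^2 + m^{3/2}\bigr)$, which is smaller once $\varepsilon$ is small and $m \gg k^2$. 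So no assembly of your tail events can close the argument; the plan fails at a deterministic level, not a probabilistic one.

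The paper is structurally different at this exact point, and the difference is not bookkeeping. Using Corollary \ref{cor:1} it writes, for $y^1_0 = A^{(0)}_j$ (or $b_0$) and $y^2_0 = y^*_0$,
\begin{equation*}
(Sy^1_0)^\top(Sy^2_0) = \left\langle TD(y^1_0)T^\top, TD(y^2_0)T^\top\right\rangle_F + \sum_{i=1}^m {y^1_0}_i\,(Te_i)^\top\left(D(D^{-1}(TD(y^2_0)T^\top)) - TD(y^2_0)T^\top\right)(Te_i),
\end{equation*}
so the roles are the opposite of yours: the \emph{full} bilinear form $\langle TD(y^1_0)T^\top, TD(y^2_0)T^\top\rangle_F$ is kept as the main term and compared to ${y^1_0}^\top y^2_0 = \langle D(y^1_0),D(y^2_0)\rangle_F$ via Lemma \ref{lem:matrixapprox} --- this, not the flat leading term, is where the additive $k$ in \eqref{alphadef} comes from (it is the $3\varepsilon\|y^1_0\|_1\|y^2_0\|_2$ error) --- while Lemma \ref{Zhang} enters only through Proposition \ref{prop:1}, to bound in operator norm the difference between $TD(y^2_0)T^\top$ and its diagonal part (two matrices both close to $\tfrac{\|y^2_0\|_1}{k}I_k$), which together with $\|Te_i\|_2^2 \le (1+\varepsilon)^2$ from Lemma \ref{lem:jll} yields the other summand of \eqref{alphadef}. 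This is Proposition \ref{prop:3}, invoked with $y^1_0 = A^{(0)}_1,\ldots,A^{(0)}_n$ (with $\delta/n$ each) and $y^1_0 = b_0$. Your step of flattening $Sy^*_0$ itself inside the inner product has no counterpart in the paper, and it is the step that cannot be repaired without reverting to something like Proposition \ref{prop:3}.
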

We will show the proof of Proposition~\ref{prop:2} later after presenting 
a few preliminary results.
In particular, to obtain a bound on $E$, we will bound each term in the summation in \eqref{eq:error2}.

The first step is to use item~$(iii)$ of Lemma~\ref{jll-approx} for all $1 \le i\le l$ in order to bound the terms
$$ {A^{(i)}}^\top \left({T^{(i)}}^\top {T^{(i)}} M_i^* {T^{(i)}}^\top {T^{(i)}} - M_i^*    \right).$$
Indeed, let us denote by ${A^{(i)}_j}$ the $j$th column of ${A^{(i)}}$. ${A^{(i)}_j}$ is a $p_i \times p_i$ matrix, hence by Lemma \ref{lem:matrixapprox}, for every $\varepsilon \in (0,1)$ we have that with probability at least $1 - 8p_i^2 \exp(-q_i/2(\varepsilon^2/2-\varepsilon^3/3))$,
\begin{equation} \label{eq:2a}
-3 \varepsilon \|M_i^*\|_* \|{A^{(i)}_j}^\top\|_F  \le \left\langle A^{(i)}_j , \left({T^{(i)}}^\top {T^{(i)}} M_i^* {T^{(i)}}^\top {T^{(i)}} - M_i^*\right)\right\rangle_F \le 3 \varepsilon \|M_i^*\|_* \|{A^{(i)}_j}^\top\|_F.
\end{equation}	
By an union bound, considering all the $j \in \{1,\cdots,n\}$, we have that with probability at least $1 - 8p_i^2n \exp(-q_i/2(\varepsilon^2/2-\varepsilon^3/3))$,  
\begin{equation}\label{eq:6}
|{A^{(i)}}^\top( {T^{(i)}}^\top {T^{(i)}} M_i^* {T^{(i)}}^\top {T^{(i)}} - M_i^*) | \le   3 \varepsilon \|M_i^*\|_F \begin{pmatrix}
\|A^{(i)}_1\|_F \\
\vdots \\
\|A^{(i)}_n\|_F
\end{pmatrix}\frac{\|M_i^*\|_*}{\|M_i^*\|_F}.
\end{equation}
Notice that %the hypothesis made in
Lemma~\ref{lem:jll0} and Lemma~\ref{lem:matrixapprox} imply %impose
\begin{equation}\label{eq:4}
q_i = O\left( \frac{\log(p_i)}{\varepsilon^2} \right).
\end{equation}
Since $p_i=O(n)$ by Hypothesis~\ref{hyp:0}\eqref{hyp:3}, this ensures that Equation~\eqref{eq:6} holds w.a.h.p.

In order to complete the task of bounding $E$, next we bound the term ${A^{(0)}}^\top (S^\top Sy^*_0 - y^*_0)$. In what follows, $D(\cdot):\mathbb{R}^m \to \S^m$ denotes the function that maps 
a vector $y_0 \in \mathbb{R}^m$ into a $m\times m$ diagonal matrix with $y_0$ in its entries. Then, $D^{-1}(\cdot)$ is the function that maps a matrix to its diagonal vector. We have the following lemma.
\begin{lem}\label{lem:3}
	For any $y_0 \in \mathbb{R}^m_+$, we have that 
	\[
	Sy_0=D^{-1}(TD(y_0)T^\top).
	\]
\end{lem}
\begin{proof}
Let $U=\sqrt{D(y_0)}$. We have $TD(y_0)T^\top=(TU)(TU)^\top$, hence the $i$th term on the diagonal of $TD(y_0)T^\top$ is equal to $\sum\limits_{j=1}^m(TU)_{ij}^2$. Since $U$ is a diagonal matrix: \[U=\begin{pmatrix}
\sqrt{{y_0}_1} & 0 & \cdots & 0 \\
0      & \ddots & & 0 \\
0 & \cdots & 0 & \sqrt{{y_0}_m}
\end{pmatrix},\] we deduce that $(TU)_{ij}=T_{ij}\sqrt{{y_0}_j}$. 
Hence the $i$th term on the diagonal of $TD(y_0)T^\top$ is equal to $\sum\limits_{j=1}^m T_{ij}^2 {y_0}_j=(Sy_0)_i$. 
\end{proof}
We deduce from Lemma~\ref{lem:3} the following corollary.
\begin{cor}\label{cor:1}
	We have that for any $y_0 \in \mathbb{R}^m_+$
	$$S^\top Sy_0= D^{-1}(T^\top D(D^{-1}(TD(y_0)T^\top))T).$$
\end{cor}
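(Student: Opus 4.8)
The plan is to reduce the claim to a double application of Lemma~\ref{lem:3}. The crucial observation is that $S^\top$ is itself the Hadamard square of a matrix: since $S = T \circ T$ we have $(S^\top)_{ij} = S_{ji} = T_{ji}^2 = (T^\top)_{ij}^2$, so $S^\top = T^\top \circ T^\top$. Note that Lemma~\ref{lem:3} is a purely algebraic identity relating a Hadamard-squared matrix to the diagonal of a congruence transform, and its proof (via $U = \sqrt{D(y_0)}$) uses neither the dimensions nor the distribution of $T$. Hence the identity of Lemma~\ref{lem:3} applies equally well to $T^\top \in \mathbb{R}^{m \times k}$ and its Hadamard square $S^\top$.

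First I would set $w := S y_0$. By Lemma~\ref{lem:3}, $w = D^{-1}(T D(y_0) T^\top) \in \mathbb{R}^k$, and since every entry of $S$ is non-negative and $y_0 \in \mathbb{R}^m_+$, we have $w \in \mathbb{R}^k_+$. Next I would apply Lemma~\ref{lem:3} a second time, now with $T^\top$ playing the role of $T$ and $w \in \mathbb{R}^k_+$ playing the role of $y_0$, which yields
$$S^\top w = D^{-1}\!\left(T^\top D(w)\, T\right).$$
If one prefers to avoid reinterpreting the lemma, the same identity follows from the one-line computation $(T^\top D(w) T)_{ii} = \sum_j T_{ji}^2\, w_j = (S^\top w)_i$. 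Substituting $w = D^{-1}(T D(y_0) T^\top)$ then gives
$$S^\top S y_0 = S^\top w = D^{-1}\!\left(T^\top D\big(D^{-1}(T D(y_0) T^\top)\big)\, T\right),$$
which is precisely the asserted identity.

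There is no substantive obstacle here beyond bookkeeping; the single point requiring care is to verify that the intermediate vector $w = S y_0$ lies in $\mathbb{R}^k_+$, since Lemma~\ref{lem:3} is stated only for non-negative arguments (its proof forms $\sqrt{D(\cdot)}$). This non-negativity is guaranteed exactly because the entries of $S$ are non-negative, which is the defining feature of the sketching matrix used throughout the paper.
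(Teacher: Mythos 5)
Your proof is correct and is essentially the paper's own argument: the paper states the corollary as a direct deduction from Lemma~\ref{lem:3}, and the intended deduction is exactly your two-step application (first $Sy_0 = D^{-1}(TD(y_0)T^\top)$, then the transposed identity $S^\top w = D^{-1}(T^\top D(w)T)$, justified since $S^\top = T^\top \circ T^\top$). Your care about $w = Sy_0 \in \mathbb{R}^k_+$ is a reasonable bookkeeping point, though strictly unnecessary, since the diagonal identity $(T^\top D(w)T)_{ii} = \sum_j T_{ji}^2 w_j$ holds for arbitrary $w$ regardless of sign.
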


\begin{prop}\label{prop:1}
	Let $y_0 \in \mathbb{R}^m$ such that ${y_0}_i\neq 0$ for all $i$, let $0<\delta<\frac{1}{4}$. Assuming that $0 < \varepsilon \le \mathcal{C}^3$, and that 
	$m \ge \frac{2^8}{\mathcal{C}_1\varepsilon^2}(3k-\ln(\delta)) $, we have that 
	$$\|D(Sy_0) - TD({y_0})T^\top\|_2 \le 16\varepsilon \frac{\max |{y_0}_i|}{k \min |{y_0}_i|}\|{y_0}\|_1 $$
	holds with probability at least $1-4\delta$.
\end{prop}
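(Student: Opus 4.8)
The plan is to recognize the left-hand side as the operator norm of the \emph{off-diagonal} part of a single random matrix and then control it with Corollary~\ref{cor:2}. First I would use Lemma~\ref{lem:3}: the $i$th diagonal entry of $M:=TD(y_0)T^{\top}$ is $\sum_{j}T_{ij}^2\,{y_0}_j=(Sy_0)_i$, so $D(Sy_0)$ is exactly the diagonal part of $M$. (The identity $Sy_0=D^{-1}(TD(y_0)T^{\top})$ is entrywise-linear in $y_0$, so although Lemma~\ref{lem:3} is phrased for $y_0\in\mathbb{R}^m_+$, the computation is valid for any $y_0$.) Hence
\[
D(Sy_0)-TD(y_0)T^{\top}=\operatorname{diag}(M)-M,
\]
and the quantity to bound is precisely $\|M-\operatorname{diag}(M)\|_2$, the operator norm of the off-diagonal part of $M$.

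Next I would import the concentration estimate. Writing $T=\tfrac1{\sqrt k}U$ with $U$ having $\mathcal N(0,I_k)$ columns, Corollary~\ref{cor:2} (applied to a strictly positive weight vector $a$) gives, after dividing by $k$, that $TD(a)T^{\top}$ lies within $\tfrac{\max a_i}{\min a_i}\tfrac{\|a\|_1}{k}\varepsilon$ of the scaled identity $\tfrac{\|a\|_1}{k}I_k$ in operator norm, with probability at least $1-2\delta$ under exactly the hypotheses of Proposition~\ref{prop:1}. For a general sign pattern I would split $y_0=y_0^+-y_0^-$ into its positive and negative parts (supported on disjoint index sets), apply Corollary~\ref{cor:2} separately to each part, and take a union bound; this produces the stated success probability $1-4\delta$. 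In the situation where Proposition~\ref{prop:1} is actually used, $y_0=y^*_0\in\mathbb{R}^m_{++}$ by Hypothesis~\ref{hyp:0}\,(\ref{hyp:5}), so there is a single part with $m$ columns and the condition $m\ge\frac{4\mathcal C^6}{\mathcal C_1\varepsilon^2}(3k-\ln\delta)$ is Corollary~\ref{cor:2} verbatim.

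The final step is an elementary diagonal-versus-operator-norm comparison. For a symmetric matrix $N$ each diagonal entry satisfies $|N_{ii}|=|e_i^{\top}Ne_i|\le\|N\|_2$, so $\|\operatorname{diag}(N)\|_2\le\|N\|_2$. Applying this to $N^{\pm}:=TD(a^{\pm})T^{\top}-\tfrac{\|a^{\pm}\|_1}{k}I_k$ shows that $\operatorname{diag}(TD(a^{\pm})T^{\top})$ is within the same distance of $\tfrac{\|a^{\pm}\|_1}{k}I_k$, and therefore by the triangle inequality the off-diagonal part of each $TD(a^{\pm})T^{\top}$ has norm at most $2\,\tfrac{\max a^{\pm}_i}{\min a^{\pm}_i}\tfrac{\|a^{\pm}\|_1}{k}\varepsilon$. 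Adding the two contributions and using $\|a^+\|_1+\|a^-\|_1=\|y_0\|_1$ together with $\tfrac{\max a^{\pm}_i}{\min a^{\pm}_i}\le\tfrac{\max|{y_0}_i|}{\min|{y_0}_i|}$ yields a bound of $2\,\tfrac{\max|{y_0}_i|}{\min|{y_0}_i|}\tfrac{\|y_0\|_1}{k}\varepsilon$, comfortably inside the claimed $16\varepsilon\tfrac{\max|{y_0}_i|}{k\min|{y_0}_i|}\|y_0\|_1$; the generous constant $16$ simply absorbs this slack (and any extra looseness one incurs by re-running the net argument of Lemma~\ref{Zhang} rather than invoking Corollary~\ref{cor:2} as a black box).

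The main obstacle is not the algebra but the positivity requirement underlying Corollary~\ref{cor:2}. Its proof (via Lemma~\ref{Zhang}) models each column of $U$ as $\mathcal N(0,a_jI_k)$, which only makes sense for $a_j>0$; this is what forces the sign split and what prevents a one-shot application to the indefinite matrix $D(y_0)$. Consequently the concentration is only effective when each sign-part has $\gtrsim k/\varepsilon^2$ columns, so a strongly imbalanced sign pattern would be genuinely problematic — but this never arises in the intended use, where $y^*_0$ is entrywise positive and a single part carries all $m$ columns. The only other point requiring care is the bookkeeping of the three reference matrices ($\tfrac{\|a^+\|_1}{k}I_k$, $\tfrac{\|a^-\|_1}{k}I_k$, and the diagonal), arranged so that the off-diagonal part telescopes cleanly after subtraction.
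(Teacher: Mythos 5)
Your skeleton matches the paper's proof almost line for line: handle the strictly positive case via Corollary~\ref{cor:2} (equivalently Lemma~\ref{Zhang}), compare the diagonal of $TD(y_0)T^\top$ to $\tfrac{\|y_0\|_1}{k}I_k$ using $|e_i^\top N e_i|\le\|N\|_2$, then reduce the general case to two positive cases by a sign split and a union bound, which is where the probability $1-4\delta$ and the constant $16$ come from. The first two steps are fine (and your observation that the identity $Sy_0=D^{-1}(TD(y_0)T^\top)$ holds for arbitrary sign patterns, by direct computation, is correct even though Lemma~\ref{lem:3} is stated for $y_0\ge 0$).

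The genuine gap is in your sign split. You decompose $y_0=y_0^+-y_0^-$ into positive and negative parts \emph{supported on disjoint index sets}, and then apply Corollary~\ref{cor:2} to each part. But Corollary~\ref{cor:2} requires $a\in\mathbb{R}^m_{++}$: a vector with zero entries is outside its hypotheses, and its bound involves $\max a_i/\min a_i$, which degenerates on such a vector. Restricting to the support does not rescue the argument, because then the ``$m$'' in the hypothesis $m\ge\frac{4\mathcal{C}^6}{\mathcal{C}_1\varepsilon^2}(3k-\ln\delta)$ becomes the support size $|\mathrm{supp}(y_0^{\pm})|$, which can be arbitrarily small (e.g.\ a single negative coordinate), so the concentration simply fails there. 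You acknowledge exactly this (``a strongly imbalanced sign pattern would be genuinely problematic'') and then discharge it by appealing to the intended application where $y_0^*>0$; that leaves the proposition, which is stated for arbitrary $y_0$ with nonzero entries, unproven. The paper's fix, which is the one idea missing from your proposal, is to \emph{shift} both parts: take $y_0^+$ to be the positive part plus $\min_i|{y_0}_i|\mathbf{1}$ and $y_0^-$ the negative part plus $\min_i|{y_0}_i|\mathbf{1}$. Both vectors are then strictly positive on all $m$ coordinates, so Corollary~\ref{cor:2} applies to each with the full column count $m$, while the distortions stay controlled: $\|y_0^{\pm}\|_1\le 2\|y_0\|_1$, $\max_i {y_0^{\pm}}_i\le 2\max_j|{y_0}_j|$ and $\min_i {y_0^{\pm}}_i\ge\min_j|{y_0}_j|$. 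Chasing these factors of $2$ through the two-term triangle inequality is precisely what produces the constant $16$; it is not slack absorbing a loose net argument, but the price of the shift that makes the general-sign case go through.
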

\begin{proof}
We first assume that $y_0 \in \mathbb{R}^m_{++}$. Let $U \in \mathbb{R}^{k \times m}$ be defined by $U= \sqrt{k}T$. By Corollary \ref{cor:2}, we conclude that with at least probability  $1-2\delta$,
\[ 
\left\|UD(y_0)U^\top - \|y_0\|_1I_k \right\|_2 \le \varepsilon \frac{\max {y_0}_i}{ \min{y_0}_i}{\|y_0\|_1}.
\]
Hence we deduce that 
\begin{equation}\label{eq:a}
\left\|TD({y_0})T^\top- \frac{\|{y_0}\|_1}{k}I_k \right\|_2 \le \varepsilon \frac{\max {y_0}_i}{\min{y_0}_i} \frac{\|y_0\|_1}{k}.
\end{equation}
Furthermore, the $ii$-th element of  $D(D^{-1}(TD(y_0)T^\top))$ is given by
$$e_i^\top TD(y_0)T^\top e_i. $$ We deduce from \eqref{eq:a} that for all $i\le k$
$$ \left| e_i^\top TD(y_0)T^\top e_i -   \frac{\|y_0\|_1}{k}\right| \le \varepsilon \frac{\max {y_0}_i}{k \min {y_0}_i}\|y_0\|_1 $$
and, since $D(D^{-1}(TD(y_0)T^\top)) -  \frac{\|y_0\|_1}{k} I_k$ is diagonal, we have that 
\begin{equation}\label{eq:b}
\left\|D(D^{-1}(TD(y_0)T^\top)) -  \frac{\|y_0\|_1}{k} I_k\right\|_2 \le \varepsilon \frac{\max {y_0}_i}{k \min {y_0}_i}\|y_0\|_1.
\end{equation}
By combining \eqref{eq:a}, \eqref{eq:b}, the triangle inequality and Lemma \ref{lem:3}
we conclude that for $y_0 \in \mathbb{R}^m_+$ we have with at least probability  $1-2\delta$,
\begin{equation}\label{eq:c}
\|D(Sy_0) - TD({y_0})T^\top\|_2 \le 2\varepsilon \frac{\max |{y_0}_i|}{k \min |{y_0}_i|}\|{y_0}\|_1.
\end{equation}
For the general case, write $y_0=y_0^+-y_0^-$ where $y_0^+,y_0^-\in \mathbb{R}^m_{++}$ are chosen in the following way
\begin{itemize}
	\item $y_0^+$ is the sum of the positive part of $y$ and $\min(|{y_0}_i|)\mathbf{1}$,
	\item $y_0^-$ is the sum of the negative part of $y$ and $\min(|{y_0}_i|)\mathbf{1}$,
\end{itemize}
where $\mathbf{1} \in \mathbb{R}^m$ is the vector having all entries equal to $1$.
Since $y_0$ has no zero components we have that $\|\min_i(|{y_0}_i|)\mathbf{1}\|_1 \le \|y_0\|_1$, hence,
$\|y_0^+\|_1 \le 2\|y_0\|_1$ and $\|y_0^-\|_1 \le 2\|y_0\|_1$. Furthermore, $\max_i {y_0}_i^+ \le 2\max_j |{y_0}_j|$, $\max_i {y_0}_i^- \le 2\max_j |{y_0}_j|$, $\min_i {y_0}_i^+ \ge \min_j |{y_0}_j|$ and $\min_i {y_0}_i^- \ge \min_j |{y_0}_j|$ hold.     \\
We have 
\begin{align*}
&\|D(S(y_0)) - TD({y_0})T^\top\|_2= \\ &\|(D(S(y^+_0))-D(S(y^-_0))) - (TD({y^+_0})T^\top-TD({y^-_0})T^\top)\|_2.
\end{align*}
Since $y^+_0$ and $y^-_0$ are positive, we conclude from \eqref{eq:c} that with at least probability  $1-4\delta$\footnote{This probability is obtained by an union bound.}:
\begin{align*}
&\|D(S(y_0)) - TD({y_0})T^\top\|_2 \le \\
&\|D(S(y^+_0)) - TD({y^+_0})T^\top\|_2+\|D(S(y^-_0)) - TD({y^-_0})T^\top\|_2 \le \\
&2\varepsilon \left(\frac{\max {y_0}_i^+ }{k \min {y_0}_i^+ }\|y^+_0\|_1 +\frac{\max {y_0}_i^- }{k \min {y_0}_i^- }\|y^-_0\|_1\right) \le 16\varepsilon \frac{\max |{y_0}_i|}{k \min {|y_0}_i|}\|{y_0}\|_1,
\end{align*}
where we also used the relations between $\|y_0^-\|_1,\|y_0^+\|_1$ and $\|y_0\|_1$ as well the relations between $\min(|{y_0}_i|)$, $\max(|{y_0}_i|)$, $\min(|{y_0}^+_i|)$, $\max(|{y_0}^+_i|)$, $\min(|{y_0}^-_i|)$, $\max(|{y_0}^-_i|)$. 
\end{proof}

\begin{prop}\label{prop:3}
	Let $S$ be as in \eqref{eq:def_Q} and let $y^1_0, y^2_0 \in \mathbb{R}^m$ be such that for all $i\in \{1,\cdots,m\}$, ${y^2_0}_i\neq 0$. Assume that $ m\ge  \frac{2^8}{\mathcal{C}_1\varepsilon^2}\left(3k-\ln(\delta)\right)$. Then, with probability at least $1 - 4 \delta - (8m^2+4m)\exp(-k/2(\varepsilon^2/2-\varepsilon^3/3))$, we have
	\[
	|(Sy^1_0)^\top(Sy^2_0) - {y^1_0}^\top y^2_0| \le \varepsilon \alpha(y^1_0, y^2_0 ) \|y^1_0\|_2  \|y^2_0\|_2,
	\]
	where $\alpha(y^1_0, y^2_0 ) = 16 \left(\frac{\|{y^1_0}\|_1}{k\|{y^1_0}\|_2} \left(\frac{\|{y^2_0}\|_1}{\|{y^2_0}\|_2}\frac{\max |{y^2_0}_i|}{\min |{y^2_0}_i|}(1+\varepsilon) +k \right) \right).$
\end{prop}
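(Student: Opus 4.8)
The plan is to rewrite the sketched inner product $(Sy^1_0)^\top(Sy^2_0)$ as a Frobenius inner product of the matrices $TD(y^1_0)T^\top$ and $TD(y^2_0)T^\top$, and then to split off a ``main term'' controlled by the Johnson--Lindenstrauss consequences of Lemma~\ref{jll-approx} from an ``error term'' controlled by Proposition~\ref{prop:1}. Writing $t_j=Te_j$ for the $j$th column of $T$, Lemma~\ref{lem:3} gives $(Sy_0)_i=(TD(y_0)T^\top)_{ii}$ for every $y_0$, so $D(Sy_0)$ is exactly the diagonal part of $TD(y_0)T^\top$. Since $D(Sy^2_0)$ is diagonal, only the diagonal of the first factor contributes and we obtain
\[ (Sy^1_0)^\top(Sy^2_0) = \langle TD(y^1_0)T^\top, D(Sy^2_0)\rangle_F. \]
Adding and subtracting $TD(y^2_0)T^\top$ in the second slot decomposes the quantity as
\[ (Sy^1_0)^\top(Sy^2_0) = \langle TD(y^1_0)T^\top, TD(y^2_0)T^\top\rangle_F + \langle TD(y^1_0)T^\top, D(Sy^2_0) - TD(y^2_0)T^\top\rangle_F. \]

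For the error term I would apply the trace (matrix H\"older) inequality $|\langle M_1,M_2\rangle_F|\le \|M_1\|_*\|M_2\|_2$. The operator-norm factor $\|D(Sy^2_0)-TD(y^2_0)T^\top\|_2$ is bounded by Proposition~\ref{prop:1} applied to $y^2_0$ (this is where the no-zero-component hypothesis on $y^2_0$ and the lower bound on $m$ enter), giving $16\varepsilon\frac{\max|{y^2_0}_i|}{k\min|{y^2_0}_i|}\|y^2_0\|_1$ with probability $1-4\delta$. For the nuclear-norm factor I would use the rank-one expansion $TD(y^1_0)T^\top=\sum_{j=1}^m (y^1_0)_j\, t_jt_j^\top$ and the triangle inequality to get $\|TD(y^1_0)T^\top\|_*\le \sum_j|(y^1_0)_j|\,\|t_j\|_2^2$; applying the JLL (Lemma~\ref{lem:jll}) to the points $\{e_1,\dots,e_m\}$ bounds all $\|t_j\|_2^2$ by $(1+\varepsilon)^2$ simultaneously, with failure probability $2m\exp(-\mathcal{C}_0\varepsilon^2 k)$, so $\|TD(y^1_0)T^\top\|_*\le (1+\varepsilon)^2\|y^1_0\|_1$. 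Multiplying the two factors reproduces exactly the first summand of $\varepsilon\,\alpha(y^1_0,y^2_0)\|y^1_0\|_2\|y^2_0\|_2$.

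For the main term I would expand $\langle TD(y^1_0)T^\top, TD(y^2_0)T^\top\rangle_F=\sum_{j=1}^m (y^1_0)_j\, e_j^\top T^\top T D(y^2_0) T^\top T e_j$, a weighted sum of quantities of the form $x^\top T^\top T Q T^\top T x$ with $x=e_j$ and $Q=D(y^2_0)$. Lemma~\ref{jll-approx}$(iii)$ (with $\|D(y^2_0)\|_F=\|y^2_0\|_2$ and rank $m$) shows each such term is within $3\varepsilon\|y^2_0\|_2$ of $e_j^\top D(y^2_0)e_j=(y^2_0)_j$; summing against the weights $(y^1_0)_j$ and union bounding over $j=1,\dots,m$ (failure probability $8m^2\exp(-\mathcal{C}_0\varepsilon^2 k)$) yields $|\langle TD(y^1_0)T^\top, TD(y^2_0)T^\top\rangle_F-(y^1_0)^\top y^2_0|\le 3\varepsilon\|y^1_0\|_1\|y^2_0\|_2$, which is dominated by the second summand $16\varepsilon\|y^1_0\|_1\|y^2_0\|_2$ of $\varepsilon\,\alpha(y^1_0,y^2_0)\|y^1_0\|_2\|y^2_0\|_2$. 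Adding the two estimates, collecting the failure probabilities as $4\delta+(8m^2+2m)\exp(-\mathcal{C}_0\varepsilon^2 k)$, and rewriting $\|y^i_0\|_1=\tfrac{\|y^i_0\|_1}{\|y^i_0\|_2}\|y^i_0\|_2$ to factor out $\|y^1_0\|_2\|y^2_0\|_2$ finishes the proof.

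The main obstacle, and the source of the asymmetry between $y^1_0$ and $y^2_0$, is the error term: one must convert the operator-norm control from Proposition~\ref{prop:1} into control on a Frobenius inner product, which forces a nuclear norm $\|TD(y^1_0)T^\top\|_*$ to appear. Handling this cleanly for a general $y^1_0$ (signed, possibly with vanishing entries) is the delicate point, but the rank-one decomposition together with the uniform column-norm bound from the JLL avoids any need to split $y^1_0$ into positive and negative parts. Everything else is bookkeeping to verify that the constants and probabilities aggregate exactly as claimed.
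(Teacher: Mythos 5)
Your proposal is correct and follows essentially the same route as the paper: the identity $(Sy^1_0)^\top(Sy^2_0)=\langle TD(y^1_0)T^\top, D(Sy^2_0)\rangle_F$ and your add-and-subtract decomposition coincide with the paper's splitting into $\langle TD(y^1_0)T^\top, TD(y^2_0)T^\top\rangle_F$ plus a remainder controlled by Proposition~\ref{prop:1} and the JLL bound $\|Te_i\|_2^2\le(1+\varepsilon)^2$, with identical constants and failure probabilities. Your two packaging choices—bounding the main term by applying Lemma~\ref{jll-approx}$(iii)$ directly to the pairs $(e_j,e_j)$ rather than invoking Lemma~\ref{lem:matrixapprox} (whose proof does exactly this for diagonal matrices), and handling the error term via the trace H\"older inequality with the rank-one nuclear-norm expansion rather than summing $|{y^1_0}_i|\,\|Te_i\|_2^2\,\|\cdot\|_2$ term by term—are equivalent reformulations of the paper's estimates, not a different argument.
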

\begin{proof}
We have that 
$(Sy^1_0)^\top(Sy^2_0)={y^1_0}^\top(S^\top S y^2_0)$, hence by Corollary \ref{cor:1}, we have that 
$$ (Sy^1_0)^\top(Sy^2_0)= {y^1_0}^\top D^{-1}(T^\top D(D^{-1}(TD(y^2_0)T^\top))T).$$
Since $D^{-1}(T^\top D(D^{-1}(TD(y^2_0)T^\top))T)$ is the vector whose $i$th component is $(Te_i)^\top D(D^{-1}(TD(y^2_0)T^\top)) (Te_i)$,
we have that
\begin{equation}\label{eq0}
(Sy^1_0)^\top(Sy^2_0)=\sum\limits_{i=1}^m {y^1_0}_i\left(e_i^\top T^\top TD(y^2_0)T^\top Te_i+  (Te_i)^\top\left(  D(D^{-1}(TD(y^2_0)T^\top)) -  TD(y^2_0)T^\top \right)(Te_i) \right).
\end{equation}
Hence we have that
\begin{align}\label{eq1}
|(Sy^1_0)^\top(Sy^2_0) - {y^1_0}^\top y^2_0 |\le &\left|\sum\limits_{i=1}^m {y^1_0}_ie_i^\top T^\top TD(y^2_0)T^\top Te_i - {y^1_0}^\top y^2_0 \right| \notag \\ 
&+ \left|\sum\limits_{i=1}^m {y^1_0}_i\left( (Te_i)^\top\left(  D(D^{-1}(TD(y^2_0)T^\top)) -  TD(y^2_0)T^\top \right)(Te_i) \right)\right|.
\end{align}	
First let us bound the term $\left|\sum\limits_{i=1}^m {y^1_0}_ie_i^\top T^\top TD(y^2_0)T^\top Te_i - {y^1_0}^\top y^2_0 \right|$ by using
$$\sum\limits_{i=1}^m {y^1_0}_i e_i^\top T^\top TD(y^2_0)T^\top Te_i =\left\langle D(y^1_0),  T^\top TD(y^2_0)T^\top T \right\rangle_F = \left\langle T D(y^1_0) T^\top,   TD(y^2_0)T^\top  \right\rangle_F.$$
Recalling that $\langle D(y^1_0), D(y^2_0) \rangle_F = (y^1_0)^\top y_0^2$, 
we use Lemma \ref{lem:matrixapprox} to conclude that with probability at least $1 - 8m^2\exp(-k/2(\varepsilon^2/2-\varepsilon^3/3))$ we have
\begin{equation*}
\left|\langle D(y^1_0), D(y^2_0) \rangle_F - \langle T D(y^1_0) T^\top,   TD(y^2_0)T^\top \rangle_F\right| \le 3\varepsilon \|D(y^1_0)\|_* \|D(y^2_0)\|_F,
\end{equation*}
and hence
\begin{align}\label{eq2}
\left|\sum\limits_{i=1}^m {y^1_0}_ie_i^\top T^\top TD(y^2_0)T^\top Te_i - {y^1_0}^\top y^2_0 \right|&=\left| {y^1_0}^\top y^2_0 - \langle T D(y^1_0) T^\top,   TD(y^2_0)T^\top \rangle_F\right| \notag\\ & \le 3\varepsilon \|y^1_0\|_1 \|y^2_0\|_2 
= 3\varepsilon \frac{\|y^1_0\|_1}{\|y^1_0\|_2} \|y^1_0\|_2 \|y^2_0\|_2.
\end{align}
Now let us bound the second term, $\left|\sum\limits_{i=1}^m {y^1_0}_i\left( (Te_i)^\top\left(  D(D^{-1}(TD(y^2_0)T^\top)) -  TD(y^2_0)T^\top \right)(Te_i) \right)\right|$, of the sum in \eqref{eq1}. According to Proposition \ref{prop:1}, we have that
$$\|D(D^{-1}(TD(y_0^2)T^\top)) - TD({y^2_0})T^\top\|_2 \le 16\varepsilon \frac{\max |{y^2_0}_i|}{k \min |{y^2_0}_i|}\|{y^2_0}\|_1 $$
holds with probability at least $1-4\delta$. Hence for all $i\le m$, we have that 
$$ \left|(Te_i)^\top\left(  D(D^{-1}(TD(y^2_0)T^\top)) -  TD(y^2_0)T^\top \right)(Te_i) \right| \le  16\varepsilon \frac{\max |{y^2_0}_i|}{k \min |{y^2_0}_i|}\|{y^2_0}\|_1 \|Te_i \|_2^2.  $$
Furthermore, by the JLL (Lemma \ref{lem:jll0}), we have that with probability at least $1 - 4m\exp(-k/2(\varepsilon^2/2-\varepsilon^3/3))$,
$\|Te_i \|_2^2 \le 1+\varepsilon$ holds for all $i\le m$. Hence with probability at least $1 - 4\delta - 4m\exp(-k/2(\varepsilon^2/2-\varepsilon^3/3))$\footnote{The probability $1 - 4\delta - 4m\exp(-k/2(\varepsilon^2/2-\varepsilon^3/3))$  is obtained by an union bound between $1-4\delta$ and  $1 - 4m\exp(-k/2(\varepsilon^2/2-\varepsilon^3/3))$, using the fact that $P(E_1\cap E_2)\geq 1 - (2-P(E_1) - P(E_2))$ for any events $E_1$ and $E_2$. } (we remind that $T=\frac{1}{\sqrt{k}}G$), we have that
\begin{equation*}
\left|(Te_i)^\top\left(  D(D^{-1}(TD(y^2_0)T^\top)) -  TD(y^2_0)T^\top \right)(Te_i) \right| \le  16\varepsilon \frac{\max |{y^2_0}_i|}{k \min |{y^2_0}_i|}\|{y^2_0}\|_1(1+\varepsilon)
\end{equation*}
and hence that 
\begin{equation}\label{eq3}
\begin{aligned}
\sum\limits_{i=1}^m\left|{y^1_0}_i(Te_i)^\top\left(  D(D^{-1}(TD(y^2_0)T^\top)) -  TD(y^2_0)T^\top \right)(Te_i) \right| \\\le  16\varepsilon \left(\frac{\|{y^1_0}\|_1\|{y^2_0}\|_1}{k\|{y^1_0}\|_2\|{y^2_0}\|_2}\frac{\max |{y^2_0}_i|}{\min |{y^2_0}_i|}(1+\varepsilon) \right) \|{y^1_0}\|_2\|{y^2_0}\|_2.
\end{aligned}
\end{equation}
By combining \eqref{eq1}, \eqref{eq2}, \eqref{eq3} we have that 
$$|(Sy^1_0)^\top(Sy^2_0) - {y^1_0}^\top y^2_0| \le 16\varepsilon \left(\frac{\|{y^1_0}\|_1}{k\|{y^1_0}\|_2} \left(\frac{\|{y^2_0}\|_1}{\|{y^2_0}\|_2}\frac{\max |{y^2_0}_i|}{\min |{y^2_0}_i|}(1+\varepsilon) +k \right) \right) \|y^1_0\|_2  \|y^2_0\|_2 $$
holds with probability $1 - 4 \delta - (8m^2+4m)\exp(-k/2(\varepsilon^2/2-\varepsilon^3/3))$. 
\end{proof}

Notice that the term $(8m^2+4m)\exp(-k/2(\varepsilon^2/2-\varepsilon^3/3))$ in the probability appearing in Proposition \ref{prop:3} can be made arbitrarily small choosing $k= k_0\frac{\log(m)}{\varepsilon^2}$, for some constant $k_0$. The proof of Proposition \ref{prop:3} requires indeed  $k$ at least equal to $k_0\frac{\log(m)}{3\varepsilon^2-2\varepsilon^3}  \ge k_0\frac{\log(m)}{\varepsilon^2}$, as we have used Lemma \ref{lem:jll0} for $h=m$ in the proof. We now explain how to choose the constant $k_0$ in the $O(\frac{\log(m)}{\varepsilon^2})$ such that  $(8m^2+4m)\exp(-k/2(\varepsilon^2/2-\varepsilon^3/3))$ is small enough.\\

Indeed for any $\delta'\in (0,1)$, since $k=k_0\frac{\log(m)}{\varepsilon^2}$, we have that
$ (8m^2+4m)\exp(-k/2(\varepsilon^2/2-\varepsilon^3/3))\le (8m^2+4m)\exp(-k/12\varepsilon^2)  \le \delta'$ is equivalent to
$$ \frac{(8m^2+4m)}{\exp({\varepsilon^2/12 k})} = \frac{8m^2+4m}{m^{ k_0/12}} \le \delta', $$
which is achieved by taking, for example, $$k_0 \ge  \frac{3+\ln(12) - \ln(\delta')}{1/12} \ge   12\frac{3\ln(m)+\ln(12) - \ln(\delta')}{\ln(m)}       \ge 12\frac{\ln(8 m^2+4m) - \ln(\delta')}{\ln(m)},$$
as $\ln(m)\ge 1$. Hence the condition required in Proposition \ref{prop:1} and \ref{prop:3} is equivalent to
$$m \ge  \frac{2^8}{\mathcal{C}_1\varepsilon^2}\left(3k_0\frac{\ln(m)}{\varepsilon^2}-\ln(\delta)\right),$$
which holds for sufficiently large $m$. 

\begin{proof}[Proof of Proposition~\ref{prop:2}.]
\noindent Let us write $b=(b_0,b_1,\cdots,b_l)$, such that the scalar product $b^\top y^*$ can be decomposed into
\begin{equation}\label{eq:dualval}
b^\top y^*= b_0^\top y^*_0 + \sum\limits_{i=1}^l \langle b_i,  M^*_i\rangle_F.
\end{equation}
%Let us define $z_0=Sy^*_0$. 
To bound $|b^\top y^* - \mathcal{Q}(b)^\top z_Q |= |b^\top y^* - b^\top \mathcal{Q}^\top(\mathcal{Q}(y^*)) |$, we first write
\begin{equation}\label{eq:11}
|b^\top y^* - b^\top \mathcal{Q}^\top(\mathcal{Q}(y^*)) |=\left|b_0^\top y^*_0 - (Sb_0)^\top S y^*_0 + \sum\limits_{i=1}^l \left( \left\langle b_i,M_i^* \right\rangle_F- \left\langle{T^{(i)}}b_i{T^{(i)}}^\top ,  {T^{(i)}} M_i^* {T^{(i)}}^\top \right\rangle_F \right) \right|.
\end{equation}
Using Lemma \ref{lem:matrixapprox}, for all $i\le l$, we can bound the terms
$$\left\langle{T^{(i)}}b_i{T^{(i)}}^\top ,  {T^{(i)}} M_i^* {T^{(i)}}^\top \right\rangle_F  - \left\langle b_i,M_i^* \right\rangle_F. $$
In fact, by an union bound, we obtain that with probability at least $1 - \sum\limits_{i=1}^l 8p_i^2\exp(-q_i/2(\varepsilon^2/2-\varepsilon^3/3))$:
\begin{equation} \label{eq:3a}
\left|\sum\limits_{i=1}^l \left( \left\langle{T^{(i)}}b_i{T^{(i)}}^\top ,  {T^{(i)}} M_i^* {T^{(i)}}^\top \right\rangle_F  - \left\langle b_i,M_i^* \right\rangle_F  \right) \right| \le 3\varepsilon \left(\max \frac{\|M_i^*\|_*}{\|M_i^*\|_F}\right) \sum\limits_{i=1}^l \|M^*_i\|_F \|b_i\|_F.
\end{equation}
Furthermore, using Proposition \ref{prop:3} with $y^1_0=b_0$ and $y^2_0=y_0^* $, we have that with probability at least $1 - 4 \delta - (8m^2+4m)\exp(-k/2(\varepsilon^2/2-\varepsilon^3/3))$ 
\begin{equation}\label{eq:c2}
|b_0^\top S^\top Sy^*_0 - b_0^\top y^*_0| \le \varepsilon \alpha(b_0,y^*_0) \|y^*_0\|_2 \|b_0\|_2.
\end{equation}
Using \eqref{eq:c2} and \eqref{eq:3a} with \eqref{eq:11} we obtain, by an union bound, that Equation \eqref{eq:8} of Proposition~\ref{prop:2} holds with probability at least
$$1-4\delta - (8m^2 +4m)\exp(-k/2(\varepsilon^2/2-\varepsilon^3/3))-\sum\limits_{i=1}^l 8p_i^2 \exp(-q_i/2(\varepsilon^2/2-\varepsilon^3/3)).$$
%In the next lemma,
Now we will bound the term ${A^{(0)}}^\top (S^\top Sy^*_0 - y^*_0)$ from \eqref{eq:error2}. % as follows:
For all $i\in \{1,\cdots,n\}$, using Proposition~\ref{prop:3} with $y^1_0=A^{(0)}_i$, $y^2_0=y_0^* $, and taking the $\delta$  of Proposition \ref{prop:3} equal to $\frac{\delta}{n}$, if  $$ m\ge  \frac{2^8}{\mathcal{C}_1\varepsilon^2}\left(3k-\ln(\frac{\delta}{n})\right),$$
%we obtain that under the assumptions of  Proposition \ref{prop:1},
we have, by an union bound, that with probability at least  
$$1 - n\left(4\frac{\delta}{n} - (8m^2+4m)\exp(-k/2(\varepsilon^2/2-\varepsilon^3/3))\right),$$
\begin{equation}\label{eq:c1}
|{A^{(0)}}^\top(S^\top Sy^*_0 - y^*_0) | \le \varepsilon \alpha(y^*_0,A^{(0)})\|y^*_0\|_2 \begin{pmatrix}
\|A^{(0)}_1\|_2 \\
\vdots \\
\|A^{(0)}_n\|_2 
\end{pmatrix}
\end{equation}
holds. 
%\end{lemma}
%\begin{proof}

%\end{proof}

\noindent We are now ready to bound the error $ E=A^\top(\mathcal{Q}^\top(z_Q) - y^*)$. Using  %Lemma \ref{lem2},
\eqref{eq:c1}, \eqref{eq:5}, \eqref{eq:error2}, \eqref{eq:6}  we prove by an union bound that with probability at least
$$1-4\delta - (8m^2 +4m)n\exp(-k/2(\varepsilon^2/2-\varepsilon^3/3))-\sum\limits_{i=1}^l 8p_i^2n \exp(-q_i/2(\varepsilon^2/2-\varepsilon^3/3)),$$
\eqref{eq:7} of Proposition~\ref{prop:2} holds. Hence, by an union bound, we prove that the claim of the proposition holds with probability at least
$$1-8\delta - (8m^2 +4m)(n+1)\exp(-k/2(\varepsilon^2/2-\varepsilon^3/3))-\sum\limits_{i=1}^l 8p_i^2(n+1) \exp(-q_i/2(\varepsilon^2/2-\varepsilon^3/3)).$$ 
\end{proof}

\subsection{Bounding the projected optimization problem}
Now we are ready to show the main theorem.

\begin{thm}\label{thm:1}
	Let $\varepsilon,\delta,m$ be such that  $0 <\delta < \frac{1}{8} $,  $0 < \varepsilon \le 1$ and
	$m \ge \frac{2^8}{\mathcal{C}_1\varepsilon^2}(3k + \ln(n) -\ln(\delta)) $. With probability at least $1-8\delta - (8m^2+4m)(n+1)\exp(-k/2(\varepsilon^2/2-\varepsilon^3/3))-\sum\limits_{i=1}^l 8p_i^2(n+1)\exp(-q_i/2(\varepsilon^2/2-\varepsilon^3/3))$, we have
	\begin{align*}
	v(\mathcal{P}^{\mathcal{K}})\left(1- \varepsilon \max \left(\alpha(y^*_0,A^{(0)},b_0), \max\limits_{i=1,\cdots,l} \frac{\|M^*_i\|_*}{\|M^*_i\|_F} \right)\left( \max_{j=1,\cdots,n} \left(\frac{1}{|\cos(\gamma_j)|}\right)\frac{4\|x^*_Q\|_2}{\cos(\theta)\|x^*\|_2} + \frac{3}{\cos(\beta)} \right)\right) \le\\ v(\mathcal{P}^{\mathcal{K}}_\mathcal{Q}) \le  v(\mathcal{P}^{\mathcal{K}}), 
	\end{align*}
	where
	\begin{itemize}
		\item $x^*,(y^*,\lambda^*)$ are optimal solutions of $\mathcal{P}^\mathcal{K}$ and $\mathcal{D}^\mathcal{K}$, respectively,
		\item  $\alpha(y^*_0,A^{(0)},b_0)=\max(\alpha(y^*_0,A^{(0)}),\alpha(y^*_0,b_0))$,
		\item $\beta$ is the angle between $(b,d)$ and $(y^*,\lambda^*)$,  $\gamma_j$ is the angle between $(y^*,\lambda^*)$ and the $j$th column of the matrix $\begin{pmatrix}
		A \\
		B
		\end{pmatrix}$, 	
		\item $\theta$ is the angle between $c$ and $x^*$,
		\item $x^*_Q$ is a feasible solution of $\mathcal{P}^\mathcal{K}_\mathcal{Q}$ such that $c^\top x^*_Q - v(\mathcal{P}^\mathcal{K}_\mathcal{Q})\le \varepsilon'$ for
		some $\varepsilon'$ satisfying 
		\[\varepsilon'\le \varepsilon  \max \left(\alpha(y^*_0,A^{(0)},b_0), \max\limits_{i=1,\cdots,l} \frac{\|M^*_i\|_*}{\|M^*_i\|_F} \right) \max_{j=1,\cdots,n} \left(\frac{1}{|\cos(\gamma_j)|}\right)\frac{\|x^*_Q\|_2}{\cos(\theta)\|x^*\|_2}v(\mathcal{P}^\mathcal{K}).\]
	\end{itemize} 
\end{thm}
Notice that in the case where $v(\mathcal{P}^\mathcal{K})< 0$ we have that $\cos(\beta)$ and $\cos(\theta)$ are negative, implying that Theorem~\ref{thm:1} also holds for the case.
Notice that the probability in the above theorem can be made arbitrarily small by considering
$k=O\left(\frac{\log(m)}{\varepsilon^2}\right)$ and $q_i=O\left(\frac{\log(p_i)}{\varepsilon^2}\right)$.
\begin{proof}
	Let $\varepsilon, \delta, m$ be as in the assumptions of theorem and let $z_Q$ be as in 
	\eqref{eq:zQ}. Since $z_Q$ is a feasible solution of $\mathcal{D}^\eps_Q$, we have by Proposition \ref{prop:2}, that
	$$v(\mathcal{D}^\eps_Q)\ge \transpose{(Q(b))}z_Q + d^\top \lambda^* \ge  \transpose{b}y^*+ d^\top \lambda^* - \varepsilon \left(\alpha(y^*_0,b_0) \|b_0\|_2 \|y^*_0\|_2 + 3 \left(\max\limits_{i=1,\cdots,l} \frac{\|M^*_i\|_*}{\|M^*_i\|_F} \right)\sum\limits_{i=1}^l  \|b_i\|_F\|M_i^*\|_F\right) . $$
	Hence, by Hypothesis \ref{hyp:0}\eqref{hyp:4}, %item~$(\ref{hyp:4})$ of Assumption~\ref{hyp:0},
	\begin{equation}\label{eq:12}
	v(\mathcal{D}^\eps_Q) \ge v(\mathcal{P}^{\mathcal{K}}) - \varepsilon \left(\alpha(y^*_0,b_0) \|b_0\|_2 \|y^*_0\|_2 + 3 \left(\max\limits_{i=1,\cdots,l} \frac{\|M^*_i\|_*}{\|M^*_i\|_F} \right)\sum\limits_{i=1}^l  \|b_i\|_F\|M_i^*\|_F\right).
	\end{equation}
	We recall that $y^*=(y^*_0,M^*_1,\cdots,M^*_l)$ and $b=(b_0,b_1,\cdots,b_l)$. Hence
	\begin{align*}
	& \|y^*\|_2^2=\|y_0^*\|_2^2+\sum\limits_{i=1}^l\|M^*_i\|_F^2  \\
	& \|b\|_2^2 = \|b_0\|_2^2+\sum\limits_{i=1}^l\|b_i\|_F^2. 
	\end{align*}
	Since
	$$\|y_0^*\|_2\|b_0\|_2+ \sum\limits_{i=1}^l\|M^*_i\|_F\|b_i\|_F \le \|y^*\|_2\|b\|_2,$$
	we deduce by \eqref{eq:12},
	$$v(\mathcal{D}^\eps_Q) \ge v(\mathcal{P}^{\mathcal{K}}) - 3\varepsilon \max \left(\alpha(y^*_0,b_0), \max\limits_{i=1,\cdots,l} \frac{\|M^*_i\|_*}{\|M^*_i\|_F}\right) \|y^*\|_2\|b\|_2.$$
	Since $\|y^*\|_2 \le \|(y^*,\lambda^*)\|_2$ and $\|b\|_2 \le \|(b,d)\|_2$, we have that
	$$v(\mathcal{D}^\eps_Q) \ge v(\mathcal{P}^{\mathcal{K}}) - 3\varepsilon \max \left(\alpha(y^*_0,b_0), \max\limits_{i=1,\cdots,l} \frac{\|M^*_i\|_*}{\|M^*_i\|_F}\right)\|(y^*,\lambda^*)\|_2\|(b,d)\|_2.$$
	Let us denote by $\beta \in [-\pi,\pi]$ the angle between $(b,d)$ and $(y^*,\lambda^*)$.
	That is, $\beta$ satisfies
	\[
	v(\mathcal{D}^{\mathcal{K}}) = b^\top y^* + d^\top \lambda^* = \cos(\beta)\|(y^*,\lambda^*)\|_2\|(b,d)\|_2.
	\]
	By Hypothesis \ref{hyp:0}\eqref{hyp:6} we have %item~$(\ref{hyp:5})$ of Assumption~\ref{hyp:0},
	$\cos(\beta)\neq 0.$
	In addition, by Hypothesis \ref{hyp:0}\eqref{hyp:4}, %item~$(\ref{hyp:4})$ of Assumption~\ref{hyp:0},
	we have $v(\mathcal{D}^{\mathcal{K}}) = v(\mathcal{P}^{\mathcal{K}})$,  therefore
	\begin{align*}
	v(\mathcal{D}^\eps_Q)  &\ge v(\mathcal{P}^{\mathcal{K}}) - 3\varepsilon  \max \left(\alpha(y^*_0,b_0), \max\limits_{i=1,\cdots,l} \frac{\|M^*_i\|_*}{\|M^*_i\|_F} \right) \frac{1}{\cos(\beta)} (b^\top y^*+d^\top \lambda^*) \\
	&= v(\mathcal{P}^{\mathcal{K}}) \left( 1 - 3\varepsilon \frac{1}{\cos(\beta)} \max \left(\alpha(y^*_0,b_0), \max\limits_{i=1,\cdots,l} \frac{\|M^*_i\|_*}{\|M^*_i\|_F} \right) \right) .
	\end{align*}
	By weak duality we deduce that 
	\begin{equation}\label{eq:9}
	v(\mathcal{P}^{\mathcal{K}}) \left( 1 - 3\frac{\varepsilon}{\cos(\beta)} \max \left(\alpha(y^*_0,b_0), \max\limits_{i=1,\cdots,l} \frac{\|M^*_i\|_*}{\|M^*_i\|_F} \right) \right) \le v(\mathcal{D}^\eps_Q) \le  v(\mathcal{P}^\eps_Q),
	\end{equation}
	where $\mathcal{P}^\eps_Q$ denotes the dual of $\mathcal{D}^\eps_Q$:
	\begin{equation}\label{eq:lpeps}
	\mathcal{P}^\eps_Q\left\{\begin{array}{llll}
	\min\limits_x & \transpose{(c+E)}x  \\
	& \mathcal{Q}(Ax - b) \in \mathcal{Q}(\mathcal{K}) \\
	& Bx -d \in \mathcal{K}'\\
	& x \in \mathbb{R}^n,
	\end{array}\right.
	\end{equation}
	where $E$ is defined as in \eqref{eq:E}.
	
	\noindent By definition of $\mathcal{Q}$, $\mathcal{P}^\mathcal{K}_\mathcal{Q}$ is a relaxation of $\mathcal{P}^\mathcal{K}$, hence it is feasible. 
	Let $x^*_Q$ be a feasible solution of $\mathcal{P}^\mathcal{K}_\mathcal{Q}$ such that $c^\top x^*_Q - v(\mathcal{P}^\mathcal{K}_\mathcal{Q})\le \varepsilon'$, where 
	\begin{equation}\label{eq:varepsilon}
	\varepsilon'\le \varepsilon  \max \left(\alpha(y^*_0,A^{(0)}), \max\limits_{i=1,\cdots,l} \frac{\|M^*_i\|_*}{\|M^*_i\|_F} \right) \max\left(\frac{1}{|\cos(\gamma_j)|}\right)\frac{\|x^*_Q\|_2}{\cos(\theta)\|x^*\|_2}v(\mathcal{P}^\mathcal{K}).
	\end{equation}
	Such a $x^*_Q$ exists since $\mathcal{P}^\mathcal{K}_\mathcal{Q}$ is feasible and its minimum is bounded by Hypothesis~\ref{hyp:0}\eqref{hyp:1}. Putting such a solution in \eqref{eq:lpeps}, we have that 
	$$ v(\mathcal{P}^\eps_Q) \le c^\top x^*_Q+ \transpose{E} x^*_Q.$$
	From $c^\top x^*_Q - v(\mathcal{P}^\mathcal{K}_\mathcal{Q})\le \varepsilon'$ we deduce that
	\[ v(\mathcal{P}^\eps_Q) \le v(\mathcal{P}^\mathcal{K}_\mathcal{Q})+ \transpose{E} x^*_Q+\varepsilon'.
	\]
	By Proposition \ref{prop:2}, we have that
	$$|E| \le \varepsilon \alpha(y^*_0,A^{(0)}) \begin{pmatrix}
	\|A^{(0)}_1\|_2 \| y^*_0\|_2 \\
	\vdots \\
	\|A^{(0)}_n\|_2 \| y^*_0\|_2
	\end{pmatrix} + 3 \varepsilon \left(\max\limits_{i=1,\cdots,l} \frac{\|M^*_i\|_*}{\|M^*_i\|_F} \right)\sum\limits_{i=1}^l  \|M_i^*\|_F \begin{pmatrix}
	\|A^{(i)}_1\|_F \\
	\vdots \\
	\|A^{(i)}_n\|_F
	\end{pmatrix}.$$
	Hence 
	$$|E| \le 3\varepsilon \max\left(\alpha(y^*_0,A^{(0)}) ,\max\limits_{i=1,\cdots,l} \frac{\|M^*_i\|_*}{\|M^*_i\|_F} \right)  \left(\begin{pmatrix}
	\|A^{(0)}_1\|_2 \| y^*_0\|_2 \\
	\vdots \\
	\|A^{(0)}_n\|_2 \| y^*_0\|_2
	\end{pmatrix} + \sum\limits_{i=1}^l  \|M_i^*\|_F \begin{pmatrix}
	\|A^{(i)}_1\|_F \\
	\vdots \\
	\|A^{(i)}_n\|_F 
	\end{pmatrix}\right).$$
	Since for all $j \in \{1,\cdots,n\}$, we have that 
	$$ \|A^{(0)}_j\|_2 \| y^*_0\|_2 + \sum\limits_{i=1}^l  \|M_i^*\|_F \|A^{(i)}_j\|_F \le \left\| \begin{pmatrix}
	A^{(0)}_j  \\
	\vdots \\
	A^{(l)}_j
	\end{pmatrix} \right\|_2 \left\| \begin{pmatrix}
	y^*_0  \\
	\vdots \\
	M^*_l
	\end{pmatrix} \right\|_2,$$
	we deduce that 
	$$|E| \le  3\varepsilon \max\left(\alpha(y^*_0,A^{(0)}),\max\limits_{i=1,\cdots,l} \frac{\|M^*_i\|_*}{\|M^*_i\|_F} \right)  \begin{pmatrix}
	\|A_1\|_2 \|y^*\|_2 \\
	\vdots \\
	\|A_n\|_2 \|y^*\|_2
	\end{pmatrix},
	$$
	where $A_j=\begin{pmatrix}
	A^{(0)}_j  \\
	\vdots \\
	A^{(l)}_j
	\end{pmatrix}$ is the $j$-th column of $A$.\\
	Consider the columns $C_1,\cdots,C_n$ of the matrix $\begin{pmatrix}
	A \\
	B
	\end{pmatrix}$ for \eqref{eq:lpg2}.
	Since for all $i$, $\|A_i\|_2\le \|C_i\|_2$, we have
	\[
	|E| \le  3\varepsilon \max\left(\alpha(y^*_0,A^{(0)}),\max\limits_{i=1,\cdots,l} \frac{\|M^*_i\|_*}{\|M^*_i\|_F} \right)  \begin{pmatrix}
	\|C_1\|_2 \|(y^*,\lambda^*)\|_2 \\
	\vdots \\
	\|C_n\|_2 \|(y^*,\lambda^*)\|_2
	\end{pmatrix}.
	\]
	Let us consider for all $j\in \{1,\cdots,n\}$ the angles $\gamma_j$ between $(y^*,\lambda^*)$ and $C_j$. Since $C_j^\top (y^*,\lambda^*) = A_j^\top y^* + B_j^\top \lambda^*=c_j \neq 0$, we have that  $\cos(\gamma_j) \neq 0$ for every $j$. Hence we have 
	
	\begin{align}\label{eq:e}
	|E| &\le 3\varepsilon \max_{j=1,\cdots,n} \left(\frac{1}{|\cos(\gamma_j)|}\right)\max\left(\alpha(y^*_0,A^{(0)}) ,\max\limits_{i=1,\cdots,l} \frac{\|M^*_i\|_*}{\|M^*_i\|_F} \right)  \begin{pmatrix}
	|C_1^\top (y^*,\lambda^*)| \\
	\vdots \\
	|C_n^\top (y^*,\lambda^*)|
	\end{pmatrix} \\
	&=3\varepsilon \max_{j=1,\cdots,n} \left(\frac{1}{|\cos(\gamma_j)|}\right)\max\left(\alpha(y^*_0,A^{(0)}) ,\max\limits_{i=1,\cdots,l} \frac{\|M^*_i\|_*}{\|M^*_i\|_F} \right)   \begin{pmatrix}
	|c_1| \\
	\vdots \\
	|c_n|
	\end{pmatrix}.
	\end{align}
	Hence, we have
	\begin{align*}
	|E^\top x^*_Q| \le \|x^*_Q\|_2 \|E\|_2 &\le 3\varepsilon \max\left(\frac{1}{|\cos(\gamma_j)|}\right)\max\left(\alpha(y^*_0,A^{(0)}) ,\max\limits_{i=1,\cdots,l} \frac{\|M^*_i\|_*}{\|M^*_i\|_F} \right)   \|x^*_Q\|_2 \|c\|_2 \\
	&= 3\varepsilon \max\left(\frac{1}{|\cos(\gamma_j)|}\right)\max\left(\alpha(y^*_0,A^{(0)}) ,\max\limits_{i=1,\cdots,l} \frac{\|M^*_i\|_*}{\|M^*_i\|_F} \right)  \frac{\|x^*_Q\|_2}{\|x^*\|_2} \frac{c^\top x^*}{\cos(\theta)},
	\end{align*}
	where $x^*$ is an optimal solution of $v(\mathcal{P}^\mathcal{K})$ and where  $\theta$ is the angle between $c$ and $x^*$. By Hypothesis~\ref{hyp:0}\eqref{hyp:6} we have $\cos(\theta)\neq 0$.\\
	Hence,
	\begin{gather*}
	v(\mathcal{P}^\eps_Q) \le v(\mathcal{P}^\mathcal{K}_\mathcal{Q})+ |\transpose{E} x^*_Q| + \varepsilon'\le \\  v(\mathcal{P}^\mathcal{K}_\mathcal{Q}) +3\varepsilon  \max \left(\alpha(y^*_0,A^{(0)}), \max\limits_{i=1,\cdots,l} \frac{\|M^*_i\|_*}{\|M^*_i\|_F} \right) \max\left(\frac{1}{|\cos(\gamma_j)|}\right)\frac{\|x^*_Q\|_2}{\cos(\theta)\|x^*\|_2}v(\mathcal{P}^\mathcal{K})+\varepsilon'.\end{gather*}
	Now by \eqref{eq:varepsilon} we have that 
	$$  v(\mathcal{P}^\eps_Q) \le  v(\mathcal{P}^\mathcal{K}_\mathcal{Q}) +4\varepsilon  \max \left(\alpha(y^*_0,A^{(0)}), \max\limits_{i=1,\cdots,l} \frac{\|M^*_i\|_*}{\|M^*_i\|_F} \right) \max\left(\frac{1}{|\cos(\gamma_j)|}\right)\frac{\|x^*_Q\|_2}{\cos(\theta)\|x^*\|_2}v(\mathcal{P}^\mathcal{K}).$$
	Combining the inequality above with \eqref{eq:9}, we obtain
	\begin{align*}
	& v(\mathcal{P}^{\mathcal{K}}) \left( 1 - 3\frac{\varepsilon}{\cos(\beta)} \max \left(\alpha(y^*_0,b_0), \max\limits_{i=1,\cdots,l} \frac{\|M^*_i\|_*}{\|M^*_i\|_F} \right) \right) \le v(\mathcal{D}^\eps_Q) \le  v(\mathcal{P}^\eps_Q)  \\
	~~~~~ & \le v(\mathcal{P}^\mathcal{K}_\mathcal{Q}) +4\varepsilon  \max \left(\alpha(y^*_0,A^{(0)}), \max\limits_{i=1,\cdots,l} \frac{\|M^*_i\|_*}{\|M^*_i\|_F} \right) \max\left(\frac{1}{|\cos(\gamma_j)|}\right)\frac{\|x^*_Q\|_2}{\cos(\theta)\|x^*\|_2}v(\mathcal{P}^\mathcal{K}).
	\end{align*}
	Hence
	\begin{align}\label{eq:10}
	v(\mathcal{P}^\mathcal{K}_\mathcal{Q}) \ge & v(\mathcal{P}^{\mathcal{K}})  - 3\frac{\varepsilon}{\cos(\beta)} \max \left(\alpha(y^*_0,b_0), \max\limits_{i=1,\cdots,l} \frac{\|M^*_i\|_*}{\|M^*_i\|_F} \right) v(\mathcal{P}^{\mathcal{K}}) \notag \\
	& - 4\varepsilon  \max \left(\alpha(y^*_0,A^{(0)}), \max\limits_{i=1,\cdots,l} \frac{\|M^*_i\|_*}{\|M^*_i\|_F} \right) \max\left(\frac{1}{|\cos(\gamma_j)|}\right)\frac{\|x^*_Q\|_2}{\cos(\theta)\|x^*\|_2}v(\mathcal{P}^\mathcal{K}).
	\end{align}
	Let $\alpha(y^*_0,A^{(0)},b_0)=\max(\alpha(y^*_0,A^{(0)}),\alpha(y^*_0,b_0))$. We have 
	\begin{gather*}
	\max\left(\alpha(y^*_0,A^{(0)},b_0),\max\limits_{i=1,\cdots,l} \frac{\|M^*_i\|_*}{\|M^*_i\|_F}\right)=\\\max\left(\max \left(\alpha(y^*_0,b_0), \max\limits_{i=1,\cdots,l} \frac{\|M^*_i\|_*}{\|M^*_i\|_F} \right),\max \left(\alpha(y^*_0,A^{(0)}), \max\limits_{i=1,\cdots,l} \frac{\|M^*_i\|_*}{\|M^*_i\|_F} \right)\right),
	\end{gather*}
	hence from \eqref{eq:10}, we obtain that
	$$ v(\mathcal{P}^\mathcal{K}_\mathcal{Q}) \ge v(\mathcal{P}^{\mathcal{K}}) \left( 1 - \varepsilon  \max \left(\alpha(y^*_0,A^{(0)},b_0), \max\limits_{i=1,\cdots,l} \frac{\|M^*_i\|_*}{\|M^*_i\|_F} \right) \left(\frac{3}{\cos(\beta)} 
	+ 4  \max\left(\frac{1}{|\cos(\gamma_j)|}\right)\frac{\|x^*_Q\|_2}{\cos(\theta)\|x^*\|_2} \right)\right),$$
	which finishes the proof. 
\end{proof}

\section{The LP case}\label{sec:6}

In this section we consider the case where we have a pure LP:
\begin{center}
	\begin{minipage}{\textwidth}
		\begin{minipage}{0.48\textwidth}
			\begin{equation}\label{eq:purelp}
			\mathcal{P}\left\{\begin{array}{llll}
			\min\limits_x & \transpose{{c}}x  \\
			& Ax \ge {b} \\
			& Bx \ge d \\
			& x \in \mathbb{R}^n
			\end{array}\right.
			\end{equation}
		\end{minipage}
		\begin{minipage}{0.48\textwidth}
			\begin{equation*}
			\mathcal{P}_S\left\{\begin{array}{llll}
			\min\limits_x & \transpose{{c}}x  \\
			& SAx \ge S{b} \\
			& Bx \ge d \\
			& x \in \mathbb{R}^n
			\end{array}\right.
			\end{equation*}
		\end{minipage}
	\end{minipage}
\end{center}
Under Hypothesis~\ref{hyp:0}, we will show a version of Theorem~\ref{thm:1} with a simplified bound. 
\subsection{A simplified error bound}

The idea is to apply some transformations that preserve the optimal value of  $\mathcal{P}$ and, then, use Theorem~\ref{thm:1} on the transformed problem.
First, let $N$ be an invertible $n\times n$ matrix and let $\mathcal{P}^N$ be the 
problem obtained by replacing $A,B,c$ in $\mathcal{P}$ by $AN, BN, N^\top c $.
%Let us consider the following transformation $N:\ \mathcal{P} \mapsto \mathcal{P}^N$
%\begin{align*}
%&A \mapsto AN,\ \ \  B \mapsto BN,\\
%&x \mapsto N^{-1}x,\ \ c \mapsto N^\top c
%\end{align*}
%where $N$ is an invertible $n \times n$ matrix.\\

We observe that the optimal value of $\mathcal{P}$ and $\mathcal{P}^N$ are the same.
This is because the map $x \mapsto N^{-1}x$ is a bijection between the sets of feasible solutions of $\mathcal{P}$ and $\mathcal{P}^N$. Furthermore, this map preserves the objective function value since $c^\top x = (N^\top c)^\top N^{-1}x$.

With that mind we will now construct a specific matrix $N$. 
We assume that $A$ has full row rank $n$ and without lost of generality, we may assume that 
the first $n$ rows of $A$ are linearly independent. Therefore, $A$ can be divided 
in blocks  as follows
\[
A = \begin{pmatrix}
\hat A \\
\tilde A
\end{pmatrix},
\]
where $\hat A$ is an $n\times n$ invertible matrix and $\tilde A$ is an 
$(m-n) \times n$ matrix. Let 
$$ N= {\hat{A}^{-1}}.$$
Hence
\begin{equation}\label{eq:an}
AN^\eta = \begin{pmatrix}
I_n\\
\tilde AN
\end{pmatrix}.
\end{equation}
This shows that every column of $AN$ can have at most $m-n+1$ nonzero elements.
Now, we recall that if a vector $u \in \mathbb{R}^m$ has at most $k$ elements, then $\|u\|_1 \leq \sqrt{k}\|u\|_2$, which is a consequence of the Cauchy-Schwarz inequality\footnote{Let $v$ be a vector such that $v_i$ is $1,-1$ or $0$ if $u_i$ is positive, negative, or null respectively. Then, $\|u\|_1 = u^\top v \leq \|v\|_2\|u\|_2 \leq \sqrt{k} \|u\|_2$.}.

Let $(AN)_j$ denote the $j$th column of $AN^\eta$. By the preceding discussion we have 
\begin{equation}\label{eq:bound_an}
\frac{\|(AN)_j\|_1}{\|(AN)_j\|_2}\le \sqrt{m-n+1},\qquad \forall j \in \{1, \ldots, n\}.
\end{equation}

Next, we will consider the effect of shifting the constants $b,d$ in $\mathcal{P}^N$ using a vector $v$. Let $\mathcal{P}^{N,v}$ be the problem obtained by replacing 
$b,d$ by $b-ANv, d-ANv$.
Then, assuming that $(N^\top c)^\top v  = 0$, we have the optimal values 
of $\mathcal{P}, \mathcal{P}^N$ and $\mathcal{P}^{N,v}$ all coincide: the 
map $x \mapsto N^{-1}x - v$ is a bijection between the sets of feasible solutions of $\mathcal{P}$ and $\mathcal{P}^{N,v}$. Furthermore, this maps preserves the objective function value since $c^\top x = (N^\top c)^\top (N^{-1}x-v)$.

We now select a specific vector $v$. By adding a small random perturbation to $c$ we can assume w.l.o.g. that all the components of $N^\top c$ are non zeros. In particular, since
$(N^\top c)_1 \neq 0$, the matrix $I_c$ obtained by replacing the first row of $I_n$ by $(N^\top c)^\top$ is still invertible. Let $v$ be the (unique) solution satisfying 
\begin{equation}\label{eq:b'}
I_c v = (0,b_2,\ldots,b_n).
\end{equation}
In view of \eqref{eq:an}, we have that for all $j\le n$, $(ANv)_j=v_j$. Furthermore, by \eqref{eq:b'}, we have that for all $2\le j\le n$, $v_j= b_j$. Hence for all $2\le j\le n$, $(ANv)_j=b_j$.

Hence, $v$ has the property that $(N^\top c)^\top v = 0$ and $(b - ANv)_j = 0$ for 
$j = 2,\ldots, n$. Therefore, $b - ANv$ has at most $m-n+1$ nonzero elements and 
we have the bound
\begin{equation}\label{eq:bound_an2}
\frac{\|b-ANv\|_1}{\|b-ANv\|_2}\le \sqrt{m-n+1}.
\end{equation}
We recall that we also have that
\begin{equation}\label{eq:bound_an1}
\frac{\|(AN)_j\|_1}{\|(AN)_j\|_2}\le \sqrt{m-n+1},\qquad \forall j \in \{1, \ldots, n\},
\end{equation}

We now have all the pieces to prove the following result.

\begin{prop}\label{thm:2}
	Consider problem the $\mathcal{P}$ in \eqref{eq:purelp}, where it is assumed that $A$ has full rank.
	Let $\varepsilon,\delta,m$ be such that  $0 <\delta < \frac{1}{8} $,  $0< \varepsilon \le 1$ and
	${m} \ge \frac{2^8}{\mathcal{C}_1\varepsilon^2}(3k + \ln(n)-\ln(\delta)) $. With probability at least $1-8\delta - (8m^2+4m)(n+1)\exp(-k/2(\varepsilon^2/2-\varepsilon^3/3))$, we have
	\begin{align*}
	v(\mathcal{P})\left(1- 16\varepsilon \sqrt{m-n+1} \left(\frac{\|y^*\|_1}{\|y^*\|_2}\frac{\max|{y^*_i}|}{k\min |{y^*_i}|}(1+\varepsilon) +1 \right) \left( \max\left(\frac{1}{|\cos(\gamma_j)|}\right)\frac{4\|x^*_Q\|_2}{\cos(\theta)\|x^*\|_2} + \frac{3}{\cos(\beta)} \right)\right)  \le\\ 
	v(\mathcal{P}_S) \le  v(\mathcal{P}), 
	\end{align*}
	where 
	\begin{itemize}
		\item $N$ and $v$ are such that \eqref{eq:bound_an2} and \eqref{eq:bound_an1}  hold.
		\item $x^*,(y^*,\lambda^*)$ are optimal solutions of $\mathcal{P}^{N,v}$ and $\mathcal{D}^{N,v}$ (the dual of $\mathcal{P}^{N,v}$), respectively,
		\item $\beta$ is the angle between $(b-ANv,d-ANv)$ and $(y^*,\lambda^*)$,  $\gamma_j$ is the angle between $(y^*,\lambda^*)$ and the $j$th column of the matrix $\begin{pmatrix}
		AN \\
		BN
		\end{pmatrix}$, 	
		\item $\theta$ is the angle between $N^\top c$ and $x^*$,
		\item $x^*_Q$ is an optimal solution of the projected problem  $\mathcal{P}_S^{N,v}$.
	\end{itemize} 
\end{prop}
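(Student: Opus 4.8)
The plan is to deduce this purely linear statement from Theorem~\ref{thm:1} applied to the reformulated problem $\mathcal{P}^{N,v}$, where the matrix $N$ and the shift $v$ built in the preceding discussion were chosen precisely so that the structural ratios entering $\alpha(\cdot,\cdot)$ collapse to $\sqrt{m-n+1}$. First I would cast \eqref{eq:purelp} as an instance of \eqref{eq:lpg2} with $\mathcal{K}=\mathbb{R}^m_+$, so that there are no semidefinite blocks (i.e. $l=0$), and with $\mathcal{K}'$ equal to the self-dual orthant associated with the unaggregated rows $Bx\ge d$. In this specialization the random map $\mathcal{Q}$ is just the single matrix $S$, and every semidefinite ingredient becomes vacuous: the terms $\max_i\|M_i^*\|_*/\|M_i^*\|_F$ disappear from the bound and the factor $\sum_{i=1}^l 8p_i^2(n+1)\exp(-\mathcal{C}_0\varepsilon^2 q_i)$ drops from the probability, leaving exactly $1-8\delta-(8m^2+2m)(n+1)\exp(-\mathcal{C}_0\varepsilon^2 k)$.

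The key structural step is to verify that the value-preserving reformulations commute with the random aggregation. Since $N$ is invertible and the shift is taken orthogonally to the cost, the preceding discussion already gives $v(\mathcal{P})=v(\mathcal{P}^{N,v})$; I would establish the analogous identity for the aggregated problems, $v(\mathcal{P}_S)=v(\mathcal{P}^{N,v}_S)$. This holds because $S(AN)=(SA)N$ and $S$ acts only on the aggregated block, so applying the same affine bijection of variables to $\mathcal{P}_S$ reproduces exactly the aggregation of $\mathcal{P}^{N,v}$; invertibility of $N$ together with the orthogonality of the shift to $N^\top c$ makes this a value-preserving bijection between the two feasible sets, and its optimal point is precisely the $x^*_Q$ named in the statement. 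Hence it suffices to prove the two-sided estimate for the pair $(\mathcal{P}^{N,v},\mathcal{P}^{N,v}_S)$, whence it transfers verbatim to $(\mathcal{P},\mathcal{P}_S)$. I would also check that Assumption~\ref{hyp:0} survives the reformulation: the dual feasible set $\{A^\top y+B^\top\lambda=c\}$ is unchanged because $N^\top$ is invertible, item~\eqref{hyp:1} is inherited and made concrete by $N^\top c>0$ from \eqref{eq:16}, and item~\eqref{hyp:5} supplies the nonvanishing entries of $y^*$ that keep $\max|y^*_i|/\min|y^*_i|$ finite.

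With the problem in the form $\mathcal{P}^{N,v}$ I would invoke Theorem~\ref{thm:1} with $A^{(0)}=AN$, $b_0=b-ANv$ and $y^*_0=y^*$. Because $l=0$, the outer maximum there reduces to $\alpha(y^*_0,A^{(0)},b_0)=\max\bigl(\alpha(y^*,AN),\alpha(y^*,b-ANv)\bigr)$, so the entire bound is governed by these two numbers. The closing computation substitutes the engineered sparsity estimates \eqref{eq:bound_an1} and \eqref{eq:bound_an2}, both of the form $\|\cdot\|_1/\|\cdot\|_2\le\sqrt{m-n+1}$, into the definitions \eqref{alphadef} and \eqref{alphadef2}: each leading ratio $\max_j\|A^{(0)}_j\|_1/(k\|A^{(0)}_j\|_2)$ and $\|b_0\|_1/(k\|b_0\|_2)$ is then at most $\sqrt{m-n+1}/k$, and both $\alpha$'s are bounded by the common quantity $16\sqrt{m-n+1}\bigl(\tfrac{\|y^*\|_1}{\|y^*\|_2}\tfrac{\max|y^*_i|}{k\min|y^*_i|}(1+\varepsilon)^2+1\bigr)$, the factor $k$ migrating correctly because $\tfrac1k\cdot k=1$ and $\tfrac1k\cdot\tfrac{\max}{\min}=\tfrac{\max}{k\min}$. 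Feeding this into the inequality of Theorem~\ref{thm:1} produces exactly the stated coefficient $16\varepsilon\sqrt{m-n+1}(\cdots)$ multiplying the angle factor $\bigl(\max_j\frac{1}{|\cos\gamma_j|}\frac{4\|x^*_Q\|_2}{\cos\theta\|x^*\|_2}+\frac{3}{\cos\beta}\bigr)$.

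The step I expect to be the main obstacle is the commutation/transfer argument rather than the closing algebra: one must apply the change of variables and shift consistently across both the aggregated block and the unaggregated block $Bx\ge d$, so that the transformed data of $\mathcal{P}^{N,v}$ genuinely coincides with the aggregation of the transformed problem, and one must confirm that each hypothesis of Theorem~\ref{thm:1} truly persists under the reformulation. Once $v(\mathcal{P}_S)=v(\mathcal{P}^{N,v}_S)$ is secured and the assumptions are checked, the remainder is the routine insertion of the $\sqrt{m-n+1}$ bounds described above.
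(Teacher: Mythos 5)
Your proposal is correct and takes essentially the same route as the paper: pass to the transformed problem $\mathcal{P}^{N,v}$, apply Theorem~\ref{thm:1} with $l=0$ (so all semidefinite terms and their probability contributions vanish), and bound the ratios entering $\alpha(y^*,AN)$ and $\alpha(y^*,b-ANv)$ by $\sqrt{m-n+1}$ via \eqref{eq:bound_an1} and \eqref{eq:bound_an2}. You in fact spell out two points the paper leaves implicit --- the commutation $v(\mathcal{P}_S)=v(\mathcal{P}^{N,v}_S)$ and the persistence of Assumption~\ref{hyp:0} under the reformulation --- while the paper's only additional remark is that in the LP case optimal solutions exist, so $x^*_Q$ may be taken exactly optimal rather than $\varepsilon'$-optimal as in Theorem~\ref{thm:1}.
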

\begin{proof}
By the preceding discussion, the optimal values of $\mathcal{P}$ and $\mathcal{P}_S$ are equal to the optimal values of the transformed problems $\mathcal{P}^{N,v}$, $\mathcal{P}_S^{N,v}$, respectively. With that in mind, we apply 
Theorem~\ref{thm:1} to  $\mathcal{P}^{N,v}$.	

Notice that in the LP case an optimal solution exists (since the optimal value is finite), hence we can take an optimal solution of the projected problem for $x^*_Q$.
To prove the proposition, all we need to do is to bound all the terms $\frac{\|(AN)_j\|_1}{\|(AN)_j\|_2}$ for $j\in \{1,\cdots,n\}$, and 
$\frac{\|b\|_1}{\|b\|_2}$, that appear in $\alpha(y^*,AN)$ and $\alpha(y^*,b-ANv)$ (see \eqref{alphadef}, \eqref{alphadef2}) in Theorem \ref{thm:1} by $\sqrt{m-n+1}$. These bounds follow from \eqref{eq:bound_an} and \eqref{eq:bound_an2}.
\end{proof}
Next we consider a special case of $\mathcal{P}$ where $d = 0$ and $B = I_n$, and $c>0$.
\begin{center}
	\begin{minipage}{\textwidth}
		\begin{minipage}{0.48\textwidth}
			\begin{equation*}
			\mathcal{P}^\ge\left\{\begin{array}{llll}
			\min\limits_x & \transpose{{c}}x  \\
			& Ax \ge {b} \\
			& x \ge 0
			\end{array}\right.
			\end{equation*}
		\end{minipage}
		\begin{minipage}{0.48\textwidth}
			\begin{equation*}
			\mathcal{P}_S^\ge \left\{\begin{array}{llll}
			\min\limits_x & \transpose{{c}}x  \\
			& SAx \ge S{b} \\
			& x \ge 0
			\end{array}\right.
			\end{equation*}
		\end{minipage}
	\end{minipage}
\end{center}

We will prove, by slightly modifying the proof of Theorem \ref{thm:1}, that we can obtain a bound in this case where the term, $\frac{4\|x^*_Q\|_2}{\cos(\theta)\|x^*\|_2}$, does not appear in the approximation ratio. We have the following theorem:
\begin{thm}\label{thm:3}
	Let $\varepsilon,\delta,m$ be such that  $0 <\delta < \frac{1}{8} $,  $0 < \varepsilon \le 1$ and
	$m \ge \frac{2^8}{\mathcal{C}_1\varepsilon^2}(3k +\ln(n)-\ln(\delta)) $. With probability at least $1-8\delta - (8m^2+4m)(n+1)\exp(-k/2(\varepsilon^2/2-\varepsilon^3/3))$, we have:
	\begin{align*}
	v(\mathcal{P}^\ge)\left(1- 48\max\left(\max\limits_{1\le j\le n}\frac{\|A_j\|_1}{\|A_j\|_2},\frac{\|b\|_1}{\|b\|_2}\right)\left(\frac{\|y^*\|_1}{\|y^*\|_2}\frac{\max|{y^*_i}|}{k\min |{y^*_i}|}(1+\varepsilon) +1 \right) \left(\max\left(\frac{1}{|\cos(\gamma_j)|}\right) + \frac{1}{\cos(\beta)} \right) \right) \le\\ 
	v(\mathcal{P}_S^\ge) \le  v(\mathcal{P}^\ge), 
	\end{align*}
	where
	\begin{itemize}
		\item $x^*,(y^*,\lambda^*)$ are optimal solutions of ${\mathcal{P}^\ge}$ and ${\mathcal{D}^\ge}$ , respectively,
		\item $\beta$ is the angle between $(b,0)$ and $(y^*,\lambda^*)$,  $\gamma_j$ is the angle between $(y^*,\lambda^*)$ and the $j$th column of the matrix $\begin{pmatrix}
		A \\
		I_n
		\end{pmatrix}$ 	
	\end{itemize} 
\end{thm}
\begin{proof}
The proof is basically the same as in Theorem \ref{thm:1}:\\
We define as in Theorem \ref{thm:1}
\begin{equation}\label{eq:lpeps2}
\mathcal{P}^\eps_S\left\{\begin{array}{llll}
\min\limits_x & \transpose{(c+E)}x  \\
& SAx \ge Sb \\
& x \ge 0 
\end{array}\right.
\end{equation}
where  $E= A^\top(S^\top S y^* -y^*)$, since we do not have SDP terms anymore. \\
As in \eqref{eq:9}, we have that
$$  v\left({\mathcal{P}^\ge}\right) \left( 1 - 3\frac{\varepsilon}{\cos(\beta)} \alpha(y^*,b) \right) \le  v(\mathcal{P}^\eps_S).$$
where
$$\alpha(y^*,b) = 16\frac{\|b\|_1}{\|b\|_2} \left(\frac{\|y^*\|_1}{\|y^*\|_2}\frac{\max|{y^*_i}|}{k\min |{y^*_i}|}(1+\varepsilon) +1 \right).$$
Hence 
\begin{equation}\label{eq:14}
 v\left({\mathcal{P}^\ge}\right) \left( 1 - 16\frac{\|b\|_1}{\|b\|_2} \frac{3\varepsilon}{\cos(\beta)} \left(\frac{\|y^*\|_1}{\|y^*\|_2}\frac{\max|{y^*_i}|}{k\min |{y^*_i}|}(1+\varepsilon) +1 \right) \right) \le  v(\mathcal{P}^\eps_S)
\end{equation}
Now, using the fact that $ c>0$, we have, as in \eqref{eq:e} that
$$	|E| \le \varepsilon \max\left(\frac{1}{|\cos(\gamma_j)|}\right)\left(3\alpha(y^*,A)\right)   \begin{pmatrix}
c_1 \\
\vdots \\
c_n
\end{pmatrix},$$
hence since $x^*_Q \ge 0$, we obtain that
$$|E^\top x^*| \le %\|N^{-1}(x^*_Q-v)\|_2 \|E\|_2 \le
\varepsilon \max\left(\frac{1}{|\cos(\gamma_j)|}\right)\left(3\alpha(y^*,A) \right)   c^\top x^*_Q ,$$
where $x^*_Q$ is an optimal solution of $\mathcal{P}^\ge_S$.
Hence
\begin{equation}\label{eq:17}
|E^\top x^*| \le  16  \varepsilon \max\limits_{1\le j \le n}\frac{\|A_j\|_1}{\|A_j\|_2} \left(\frac{\|y^*\|_1}{\|y^*\|_2}\frac{\max|{y^*_i}|}{k\min |{y^*_i}|}(1+\varepsilon) +1 \right)3\max\left(\frac{1}{|\cos(\gamma_j)|}\right)  v(\mathcal{P}^\ge_S).
\end{equation}
Since 
$$ v(\mathcal{P}^\eps_S) \le v(\mathcal{P}^\ge_S)+ |\transpose{E} x^*_Q|,$$
and that $v(\mathcal{P}^\ge_S) \le v(\mathcal{P}^\ge)$, we obtain the theorem by combining \eqref{eq:14} and \eqref{eq:17}.
%\memo{From above, we have
%  $v(\mathcal{P}^\ge)(1-16\cdots) \leq  v(\mathcal{P}^\ge_S)+ 16 \epsilon \ldots  v(\mathcal{P}^\ge_S)$, which leads to
%  $v(\mathcal{P}^\ge)(1-16\cdots)- 16 \epsilon \ldots  v(\mathcal{P}^\ge) \leq  v(\mathcal{P}^\ge_S)$. Does the left-hand side become
%  $v(\mathcal{P}^\ge)(1-16\cdots)(\cdot)$?
%}
%\memo{yes indeed expect for some 4 factor that I forgot to correct (3 is enough)}
\end{proof}

\subsection{Interpretation of the error bound}
Theorem \ref{thm:3} shows that the approximation ratio $R=\frac{v(\mathcal{P}^\ge)-v(\mathcal{P}^\ge_S)}{v(\mathcal{P}^\ge)}$  is given by
$$ R= O\left( \varepsilon \sigma \left(\frac{\sqrt{n}}{k}+1\right) \max\left(\frac{1}{|\cos(\gamma_j)|}\right)\right),$$
where $\sigma \ge \max\left(\max\limits_{1\le j\le n}\frac{\|A_j\|_1}{\|A_j\|_2},\frac{\|b\|_1}{\|b\|_2}\right) $, as $\frac{\|y^*\|_1}{\|y^*\|_2} \le \sqrt{n}$ since $y^*$can be chosen to have at most $n$ non-zero components, and as the other terms in the error bound can be interpreted as constants that do not depend on the dimension $m,n$ of the problem. 
Notice that $\sigma$ is a bound on the sparsity of the columns of $A$ and the vector $b$.
Furthermore, the probability bound $1-8\delta - (8m^2+4m)(n+1)\exp(-k/2(\varepsilon^2/2-\varepsilon^3/3))$ can be made as close to $1$ as we want by choosing $k=O(\frac{\log(m)}{\varepsilon^2})$. Let $\gamma^*\in \argmin |\cos(\gamma_j)|$, we obtain that
$$ R=\frac{v(\mathcal{P}^\ge)-v(\mathcal{P}^\ge_S)}{v(\mathcal{P}^\ge)} =O\left(\varepsilon \frac{\sigma}{\cos(\gamma^*)} \left( \varepsilon^2\frac{\sqrt{n}}{\log(m)}+1\right) \right). $$
Let us take $$\varepsilon = O\left(\sqrt{\frac{\log(m)}{n^{1/2-\alpha}}}\right),$$
for $0\le \alpha \le \frac{1}{2}$. Then $\varepsilon^2\frac{\sqrt{n}}{\log(m)}=O(n^{\alpha})$, which implies that 
$$ R=O\left(n^\alpha\frac{\sqrt{\sigma^2\log(m)}}{{n}^{1/4-\alpha/2}\cos(\gamma^*)}\right)=O\left(n^{3\alpha/2-1/4}\frac{\sqrt{\sigma^2\log(m)}}{\cos(\gamma^*)}\right). $$

This suggests that taking $\varepsilon = O\left(\sqrt{\frac{\log(m)}{n^{1/2-\alpha}}}\right)$ allows us to obtain a ratio $R$ if 
$$R\cos(\gamma^*)\ge \mathcal{C} {n^{3\alpha/2-1/4}\sqrt{\log(m)\sigma^2}},$$
for some constant $\mathcal{C}$. \\

For such $\varepsilon$, we have $k=O(\frac{\log(m)}{\varepsilon^2})=O(n^{1/2-\alpha})$. Furthermore, the condition 
$m \ge \frac{2^8}{\mathcal{C}_1\varepsilon^2}(3k +\log(n)-\ln(\delta)) $
implies that 
$$m \ge \mathcal{C}'\left(\frac{n^{1-2\alpha}}{\log(m)} + \log(n) -\log(\delta)\right),$$
for some constant $\mathcal{C}'$, which holds, for all $0\le \alpha \le 1/2$ as long as $m,n$ are large enough.\\

The above discussion also suggests that the error bound decreases  when the columns of $A$ and the vector $b$ are sparse.

\section{Numerical results}\label{sec:5}
In this section, we present some preliminary numerical experiments where we generate random instances and we solve both the original formulation and the smaller reduced version. 
A difficulty in performing these experiments is that, although for $0<\varepsilon<1$ fixed, the bound $m\ge  \frac{2^8}{\mathcal{C}_1\varepsilon^2}\left(3k + \ln(m)-\ln(\delta)\right)$ is always satisfied for sufficiently large $m$, %it's value
such $m$ is too large for the computer we are using to solve the original LP. Nevertheless we still perform some experiments on the pure LP (as in Section~\ref{sec:6})  with $m$ up to $20000$. 
All results have been obtained using Gurobi called 
through Julia \cite{julia} and the JuMP \cite{jump} interface. The specs of the machine are as follows: Intel Core i5 at 3.8GHz with 8 GB of DDR4 RAM.

\subsection{Random instances}
The random instances considered here are all feasible: $c$ is the all ones vector, $A$ is a random matrix build either from the uniform or the normal distribution, $b=Ax_0 - \eta$ where both $x_0$ and $\eta$ are random positive vectors and  $\{x \mid Bx -d \in \mathcal{K}'\}$ is just the non-negative orthant. The results are summarized in the tables below, ``$m$'' denotes the number of constraints, ``$n$'' the number of variables, ``$k$'' the number of constraints of the projected problem (computed for $\varepsilon=0.2$), ``$d$'' the density of matrix $A$, ``law'' is the probability law used to generate the coefficient of $A$. Here $U(a,b)$ denotes the uniform law in the interval $[a,b]$ and $N(a,b)$ denotes the normal law of mean $a$ and standard deviation $b$. Each line of the table is the average over 10 instances generated with the same $m$, $n$, $d$, law. Furthermore, ``meantorg'' is the average time to solve the original LP, ``stdtorg'' is the corresponding standard deviation, ``meantproj'' and ``stdtproj'' are respectively the average time and the standard deviation to solve the projected problem, ``meanratio'' is the average error ratio $\frac{v(P)-v(P_S)}{v(P)}$ and ``stdratio'' is the corresponding standard deviation.

\begin{table}[tb]
	\centering
	\tiny
	\begin{tabular}{llllllllll}
		\hline
		m & n & k & law & meantorg{[}s{]} & stdtorg{[}s{]} & meantproj{[}s{]} & stdtproj{[}s{]} & meanratio & stdratio \\
		\hline
		5000  & 1200  & 321 & U(-1,1) & 1.19E+01 & 3.17E-01 & 6.46E-01 & 8.44E-03 & 1.00E+00 & 0.00E+00 \\
		5000  & 1200  & 321 & U(-1,2) & 1.06E+01 & 3.33E-02 & 5.92E-01 & 2.66E-05 & \textbf{1.75E-01} & 1.31E-04 \\
		5000  & 1200  & 321 & U(0,1)  & 1.05E+01 & 9.52E-02 & 5.94E-01 & 3.32E-05 & \textbf{1.12E-01} & 4.62E-05 \\
		5000  & 1200  & 321 & U(0,2)  & 1.06E+01 & 5.19E-02 & 5.94E-01 & 1.39E-05 & \textbf{1.05E-01} & 6.72E-05 \\
		5000  & 1200  & 321 & U(1,2)  & 1.05E+01 & 8.85E-02 & 5.92E-01 & 1.01E-05 &\textbf{ 8.79E-02} & 1.20E-05 \\
		5000  & 1200  & 321 & N(0,1)  & 1.06E+01 & 1.27E-01 & 5.91E-01 & 2.82E-05 & 1.00E+00 & 0.00E+00 \\
		5000  & 1200  & 321 & N(0,2)  & 1.08E+01 & 1.29E-01 & 5.95E-01 & 5.15E-05 & 1.00E+00 & 0.00E+00 \\
		5000  & 1200  & 321 & N(1,2)  & 1.09E+01 & 8.92E-02 & 5.97E-01 & 4.97E-05 & \textbf{1.81E-01} & 4.36E-05 \\ \cline{2-10}
		5000  & 1500  & 331 & U(-1,1) & 1.22E+01 & 2.65E-01 & 7.93E-01 & 3.78E-05 & 1.00E+00 & 0.00E+00 \\
		5000  & 1500  & 331 & U(-1,2) & 1.19E+01 & 1.06E-01 & 7.91E-01 & 6.57E-05 & \textbf{1.73E-01} & 7.14E-05 \\
		5000  & 1500  & 331 & U(0,1)  & 1.20E+01 & 1.02E-01 & 8.00E-01 & 8.39E-04 & \textbf{1.11E-01} & 3.95E-05 \\
		5000  & 1500  & 331 & U(0,2)  & 1.18E+01 & 1.46E-01 & 7.94E-01 & 1.63E-04 & \textbf{1.00E-01} & 1.62E-05 \\
		5000  & 1500  & 331 & U(1,2)  & 1.19E+01 & 1.22E-01 & 7.96E-01 & 1.70E-04 & \textbf{8.73E-02} & 4.46E-06 \\
		5000  & 1500  & 331 & N(0,1)  & 1.20E+01 & 1.06E-01 & 7.95E-01 & 1.56E-04 & 1.00E+00 & 0.00E+00 \\
		5000  & 1500  & 331 & N(0,2)  & 1.17E+01 & 8.17E-02 & 7.91E-01 & 3.86E-05 & 1.00E+00 & 4.97E-08 \\
		5000  & 1500  & 331 & N(1,2)  & 1.20E+01 & 7.10E-02 & 7.96E-01 & 3.88E-05 & \textbf{1.83E-01} & 2.96E-05 \\\cline{2-10}
		5000  & 4000  & 375 & U(-1,1) & 1.88E+01 & 1.58E-01 & 2.40E+00 & 9.18E-03 & 1.00E+00 & 0.00E+00 \\
		5000  & 4000  & 375 & U(-1,2) & 2.25E+01 & 9.32E-01 & 2.37E+00 & 2.26E-04 & \textbf{1.66E-01} & 2.66E-05 \\
		5000  & 4000  & 375 & U(0,1)  & 2.22E+01 & 7.15E-01 & 2.37E+00 & 1.50E-04 & \textbf{1.02E-01} & 1.73E-05 \\
		5000  & 4000  & 375 & U(0,2)  & 2.29E+01 & 2.81E+00 & 2.38E+00 & 2.73E-04 & \textbf{1.01E-01} & 1.26E-05 \\
		5000  & 4000  & 375 & U(1,2)  & 2.22E+01 & 1.36E+00 & 2.40E+00 & 2.96E-03 & \textbf{9.02E-02} & 7.77E-06 \\
		5000  & 4000  & 375 & N(0,1)  & 1.81E+01 & 1.13E-01 & 2.38E+00 & 1.77E-03 & 1.00E+00 & 0.00E+00 \\
		5000  & 4000  & 375 & N(0,2)  & 1.86E+01 & 6.02E-01 & 2.39E+00 & 4.61E-04 & \textbf{9.94E-01} & 2.67E-05 \\
		5000  & 4000  & 375 & N(1,2)  & 2.11E+01 & 6.64E-01 & 2.39E+00 & 2.37E-03 & \textbf{1.86E-01} & 2.88E-05 \\ \hline
		10000 & 1200  & 321 & U(-1,1) & 6.17E+01 & 8.51E+00 & 1.58E+00 & 1.38E-04 & 1.00E+00 & 0.00E+00 \\
		10000 & 1200  & 321 & U(-1,2) & 6.12E+01 & 7.31E+00 & 1.58E+00 & 6.16E-04 & \textbf{1.39E-01} & 8.37E-05 \\
		10000 & 1200  & 321 & U(0,1)  & 5.96E+01 & 5.59E+00 & 1.58E+00 & 4.50E-05 & \textbf{9.20E-02} & 7.34E-06 \\
		10000 & 1200  & 321 & U(0,2)  & 5.99E+01 & 1.62E+00 & 1.58E+00 & 1.03E-05 & \textbf{7.99E-02} & 2.00E-05 \\
		10000 & 1200  & 321 & U(1,2)  & 6.09E+01 & 6.91E+00 & 1.58E+00 & 2.72E-04 &\textbf{ 6.83E-02} & 5.96E-06 \\
		10000 & 1200  & 321 & N(0,1)  & 6.24E+01 & 2.33E+01 & 1.58E+00 & 2.11E-04 & 1.00E+00 & 0.00E+00 \\
		10000 & 1200  & 321 & N(0,2)  & 5.96E+01 & 4.04E+00 & 1.58E+00 & 1.80E-04 & 1.00E+00 & 0.00E+00 \\
		10000 & 1200  & 321 & N(1,2)  & 5.89E+01 & 5.58E+00 & 1.58E+00 & 3.48E-06 & \textbf{1.37E-01} & 4.95E-05 \\\cline{2-10}
		10000 & 1500  & 331 & U(-1,1) & 6.45E+01 & 9.89E+00 & 2.03E+00 & 2.31E-05 & 1.00E+00 & 0.00E+00 \\
		10000 & 1500  & 331 & U(-1,2) & 6.48E+01 & 7.24E+00 & 2.03E+00 & 1.47E-05 & \textbf{1.37E-01} & 1.09E-05 \\
		10000 & 1500  & 331 & U(0,1)  & 6.16E+01 & 4.12E+00 & 2.03E+00 & 2.14E-05 & \textbf{9.12E-02} & 2.02E-05 \\
		10000 & 1500  & 331 & U(0,2)  & 6.22E+01 & 4.49E+00 & 2.05E+00 & 3.94E-04 & \textbf{7.90E-02} & 8.12E-06 \\
		10000 & 1500  & 331 & U(1,2)  & 6.36E+01 & 1.18E+01 & 2.04E+00 & 1.86E-04 & \textbf{6.75E-02} & 6.20E-06 \\
		10000 & 1500  & 331 & N(0,1)  & 6.51E+01 & 3.83E+00 & 2.04E+00 & 1.86E-05 & 1.00E+00 & 0.00E+00 \\
		10000 & 1500  & 331 & N(0,2)  & 6.37E+01 & 7.80E+00 & 2.03E+00 & 5.35E-06 & 1.00E+00 & 0.00E+00 \\
		10000 & 1500  & 331 & N(1,2)  & 6.25E+01 & 3.05E+00 & 2.03E+00 & 2.06E-05 & \textbf{1.38E-01} & 1.31E-05 \\\cline{2-10}
		10000 & 4000  & 375 & U(-1,1) & 9.22E+01 & 4.82E+00 & 5.84E+00 & 1.97E-05 & 1.00E+00 & 0.00E+00 \\
		10000 & 4000  & 375 & U(-1,2) & 1.00E+02 & 9.56E+00 & 5.86E+00 & 1.73E-04 & \textbf{1.35E-01} & 2.08E-05 \\
		10000 & 4000  & 375 & U(0,1)  & 9.64E+01 & 1.14E+01 & 5.87E+00 & 2.10E-03 & \textbf{8.47E-02} & 5.29E-06 \\
		10000 & 4000  & 375 & U(0,2)  & 1.02E+02 & 1.64E+01 & 5.88E+00 & 1.86E-03 & \textbf{8.05E-02} & 3.51E-06 \\
		10000 & 4000  & 375 & U(1,2)  & 1.05E+02 & 2.52E+01 & 5.86E+00 & 1.93E-04 & \textbf{7.05E-02} & 5.04E-06 \\
		10000 & 4000  & 375 & N(0,1)  & 9.10E+01 & 1.35E+01 & 5.87E+00 & 9.09E-04 & 1.00E+00 & 0.00E+00 \\
		10000 & 4000  & 375 & N(0,2)  & 9.22E+01 & 7.08E+00 & 5.85E+00 & 3.24E-05 & 1.00E+00 & 6.88E-07 \\
		10000 & 4000  & 375 & N(1,2)  & 1.01E+02 & 1.62E+01 & 5.86E+00 & 1.92E-04 & \textbf{1.48E-01} & 1.64E-05 \\\cline{2-10}
		10000 & 7000  & 400 & U(-1,1) & 1.19E+02 & 1.37E+01 & 1.14E+01 & 2.41E-04 & 1.00E+00 & 0.00E+00 \\
		10000 & 7000  & 400 & U(-1,2) & 1.41E+02 & 9.06E+01 & 1.15E+01 & 6.54E-03 & \textbf{1.33E-01} & 1.47E-05 \\
		10000 & 7000  & 400 & U(0,1)  & 1.41E+02 & 4.88E+01 & 1.15E+01 & 7.41E-04 & \textbf{8.14E-02} & 4.63E-06 \\
		10000 & 7000  & 400 & U(0,2)  & 1.59E+02 & 3.16E+02 & 1.15E+01 & 1.53E-04 & \textbf{8.11E-02} & 3.38E-06 \\
		10000 & 7000  & 400 & U(1,2)  & 1.60E+02 & 7.92E+02 & 1.15E+01 & 4.30E-04 & \textbf{7.09E-02} & 4.31E-06 \\
		10000 & 7000  & 400 & N(0,1)  & 1.18E+02 & 1.42E+01 & 1.15E+01 & 4.12E-04 & 1.00E+00 & 0.00E+00 \\
		10000 & 7000  & 400 & N(0,2)  & 1.19E+02 & 3.51E+01 & 1.15E+01 & 1.38E-04 & 9.95E-01 & 1.29E-05 \\
		10000 & 7000  & 400 & N(1,2)  & 1.32E+02 & 4.61E+01 & 1.15E+01 & 4.98E-03 & \textbf{1.49E-01} & 1.40E-05 \\\cline{2-10}
		10000 & 9000  & 411 & U(-1,1) & 1.35E+02 & 3.32E+01 & 1.52E+01 & 6.82E-04 & 1.00E+00 & 0.00E+00 \\
		10000 & 9000  & 411 & U(-1,2) & 1.53E+02 & 3.20E+01 & 1.52E+01 & 1.18E-03 & \textbf{1.31E-01} & 1.50E-05 \\
		10000 & 9000  & 411 & U(0,1)  & 1.65E+02 & 3.38E+02 & 1.52E+01 & 1.21E-03 & \textbf{7.80E-02} & 4.61E-06 \\
		10000 & 9000  & 411 & U(0,2)  & 1.75E+02 & 3.60E+02 & 1.52E+01 & 1.32E-03 & \textbf{7.86E-02} & 3.17E-06 \\
		10000 & 9000  & 411 & U(1,2)  & 1.84E+02 & 5.38E+02 & 1.53E+01 & 1.60E-02 & \textbf{6.91E-02} & 4.00E-06 \\
		10000 & 9000  & 411 & N(0,1)  & 1.32E+02 & 3.25E+01 & 1.51E+01 & 4.11E-02 & 1.00E+00 & 5.48E-08 \\
		10000 & 9000  & 411 & N(0,2)  & 1.31E+02 & 1.68E+01 & 1.51E+01 & 1.11E-02 & 9.94E-01 & 5.36E-06 \\
		10000 & 9000  & 411 & N(1,2)  & 1.50E+02 & 4.74E+01 & 1.52E+01 & 6.04E-03 & \textbf{1.44E-01} & 1.90E-05 \\\hline
		20000 & 10000 & 416 & U(0,1)  & 8.32E+02 & 3.12E+03 & 4.36E+01 & 3.72E-01 & \textbf{6.47E-02} & 5.04E-06 \\
		20000 & 10000 & 416 & N(0,1)  & 7.21E+02 & 2.48E+02 & 4.33E+01 & 1.11E-01 & 1.00E+00 & 0.00E+00 \\
		\hline
	\end{tabular}
	\caption{Numerical results for $d=0.1$. The values where $meanratio < 0.2$ are written in boldface}
	\label{table:2}
\end{table}

\clearpage

First we notice, from Table \ref{table:2}, that the time to solve the projected LP is always shorter than the time to solve the original LP, and furthermore as the number of constraints, $m$, of the original LP increases the gap between the two times increases drastically. Concerning the average error ratio between the values of the two problems, the only striking observation we can make from Table \ref{table:2} is that the projected problem approximates the original well when the probability law, used to generate the coefficients of the constraint matrix $A$, does not have $0$ in expectation. Such ratio are written in boldface in Table \ref{table:2}. They correspond to the case when $meanratio <0.2$.\\
In Table \ref{table:3} we see how the standard deviation of the probability law affects the ratio. We see that in both cases (the expectation of the probability law is equal to $0$ or different from $0$) an increase of the standard deviation seems to decrease a little the error ratio.
\begin{table}[tb]
	\centering
	\tiny
	\begin{tabular}{llllllllll}
		\hline
		m     & n    & k    & law       & meantorg[s] & stdtorg[s]  & meantproj[s] & stdtproj[s] & meanratio & stdratio \\ \hline
		5000  & 2000 & 344 & U(0,1)    & 9.31E+01 & 1.53E+04 & 1.02E+01  & 3.50E-02 & 1.87E-02  & 4.19E-07 \\
		5000  & 2000 & 344  & U(0,10)   & 5.51E+01 & 5.39E+00 & 1.02E+01  & 3.53E-03 & 1.76E-02  & 5.65E-07 \\
		5000  & 2000 & 344  & U(0,50)   & 1.11E+02 & 1.57E+04 & 1.01E+01  & 9.34E-03 & 1.82E-02  & 3.30E-07 \\
		5000  & 2000 & 344 & U(0,100)  & 1.21E+02 & 1.92E+04 & 1.02E+01  & 1.07E-04 & 1.79E-02  & 1.93E-07 \\
		5000  & 2000 & 344  & U(0,500)  & 8.15E+01 & 8.71E+00 & 1.02E+01  & 2.22E-04 & 1.80E-02  & 5.42E-07 \\
		5000  & 2000 & 344 & U(0,1000) & 8.97E+01 & 2.32E+01 & 1.03E+01  & 1.00E-02 & 1.77E-02  & 4.90E-07 \\ \cline{2-10}
		5000  & 2000 & 344  & N(0,1)    & 8.59E+01 & 1.19E+04 & 1.02E+01  & 1.24E-03 & 9.94E-01  & 4.88E-05 \\
		5000  & 2000 & 344  & N(0,10)   & 5.25E+01 & 5.42E+00 & 1.02E+01  & 7.57E-05 & 9.71E-01  & 5.10E-05 \\
		5000  & 2000 & 344  & N(0,50)   & 5.38E+01 & 1.65E+00 & 1.03E+01  & 1.02E-02 & 9.76E-01  & 2.94E-05 \\
		5000  & 2000 & 344  & N(0,100)  & 5.48E+01 & 1.68E+00 & 1.02E+01  & 1.85E-04 & 9.73E-01  & 2.03E-05 \\
		5000  & 2000 & 344  & N(0,500)  & 1.04E+02 & 1.91E+04 & 1.02E+01  & 8.15E-04 & 9.69E-01  & 3.49E-05 \\ \hline
		5000  & 2000 & 344 & U(0,1)    & 8.58E+02 & 8.21E+03 & 9.30E+01  & 1.85E-01 & 1.42E-02  & 2.21E-07 \\
		10000 & 7000 & 400  & U(0,10)   & 8.62E+02 & 7.36E+03 & 9.29E+01  & 5.14E-02 & 1.42E-02  & 2.95E-07 \\
		10000 & 7000 & 400  & U(0,100)  & 1.06E+03 & 2.60E+03 & 9.31E+01  & 1.02E-01 & 1.42E-02  & 2.43E-07 \\
		10000 & 7000 & 400  & U(0,1000) & 1.23E+03 & 4.87E+03 & 9.33E+01  & 6.28E-02 & 1.43E-02  & 3.63E-07 \\\cline{2-10}
		10000 & 7000 & 400  & N(0,1)    & 1.79E+03 & 1.03E+07 & 9.33E+01  & 2.05E-01 & 9.91E-01  & 3.99E-05 \\
		10000 & 7000 & 400  & N(0,10)   & 7.15E+02 & 2.94E+02 & 9.33E+01  & 9.72E-02 & 9.85E-01  & 7.17E-06 \\
		10000 & 7000 & 400  & N(0,100)  & 7.97E+02 & 5.36E+02 & 9.33E+01  & 1.39E-01 & 9.83E-01  & 2.22E-05 \\
		10000 & 7000 & 400  & N(0,1000) & 8.52E+02 & 1.49E+03 & 9.32E+01  & 6.74E-02 & 9.79E-01  & 1.82E-05 \\
		\hline
	\end{tabular}
	\caption{Numerical results when the standard deviation of the law changes ($d=1$)}
	\label{table:3}
\end{table}

%\begin{tikzpicture}
%\begin{axis}[
%xlabel=$law variance$ , % \hertz requires SIunits
%ylabel=$meanratio$ ,
%title={Average approximatio ratio uniform case},
%width=0.75\linewidth,
%]
%\addplot[dashed, mark=*,mark size=2pt,color=blue] %
%table[x=v,y=meanratio,col sep=comma]{57U.csv};
%\addlegendentry{$m=5000,n=2000$ Uniform law};
%
%\addplot[dashed, mark=*,mark size=2pt,color=red] %
%table[x=v,y=meanratio,col sep=comma]{107U.csv};
%\addlegendentry{$m=10000,n=7000$ Uniform law};
%\end{axis}
%\end{tikzpicture}
%
%\begin{tikzpicture}
%\begin{axis}[
%xlabel=$law variance$ , % \hertz requires SIunits
%ylabel=$meanratio$ ,
%title={Average approximatio ratio normal case},
%width=0.75\linewidth,
%]
%\addplot[dashed, mark=*,mark size=2pt,color=blue] %
%table[x=v,y=meanratio,col sep=comma]{52N.csv};
%\addlegendentry{$m=5000,n=2000$ Normal law};
%
%\addplot[dashed, mark=*,mark size=2pt,color=red] %
%table[x=v,y=meanratio,col sep=comma]{107N.csv};
%\addlegendentry{$m=10000,n=7000$ Normal law};
%\end{axis}
%\end{tikzpicture}

\begin{figure}[tb]
	\begin{minipage}[b]{0.5\columnwidth}
		\centering
		\begin{tikzpicture}
		\begin{axis}[
		xlabel=$n$ , % \hertz requires SIunits
		ylabel=meanratio ,
		legend style={font=\tiny},
		width=0.95\linewidth,
		y tick label style={
			/pgf/number format/.cd,
			fixed,
			fixed zerofill,
			precision=3,
			/tikz/.cd
		},
		x tick label style={
			/pgf/number format/.cd,
			fixed,
			fixed zerofill,
			precision=1,
			/tikz/.cd
		}
		]
		\addplot[dashed, mark=*,mark size=2pt,color=blue] %
		table[x=n,y=r,col sep=comma]{5U.csv};
		\addlegendentry{\tiny $m=5000$ Uniform law};
		
		\addplot[dashed, mark=*,mark size=2pt,color=red] %
		table[x=n,y=r,col sep=comma]{7U.csv};
		\addlegendentry{\tiny $m=7000$ Uniform law};
		
		\end{axis}
		\end{tikzpicture}
		\caption{Average approximation ratio  (uniform case)}
		\label{approxratio_uniform}
	\end{minipage}
	\begin{minipage}[b]{0.5\columnwidth}
		\begin{tikzpicture}
		\begin{axis}[
		xlabel=$n$ , % \hertz requires SIunits
		ylabel=meanratio ,
		width=0.95\linewidth,
	y tick label style={
		/pgf/number format/.cd,
		fixed,
		fixed zerofill,
		precision=3,
		/tikz/.cd
	},
	x tick label style={
		/pgf/number format/.cd,
		fixed,
		fixed zerofill,
		precision=1,
		/tikz/.cd
	}
		]
		\addplot[dashed, mark=*,mark size=2pt,color=blue] %
		table[x=n,y=r,col sep=comma]{5N.csv};
		\addlegendentry{\tiny $m=5000$ Normal law};
		
		\addplot[dashed, mark=*,mark size=2pt,color=red] %
		table[x=n,y=r,col sep=comma]{7N.csv};
		\addlegendentry{\tiny $m=7000$ Normal law};
		
		\end{axis}
		\end{tikzpicture}
		\caption{Average approximation ratio (normal case)}
		\label{approxratio_norm}
	\end{minipage}
\end{figure}

Next we show, for $m$ fixed, the influence of $n$ on the value of the approximation ratio.
Figure~\ref{approxratio_uniform} shows how the value of the approximation ratio changes, in the case of an uniform law, in function of $n$ for $m$ fixed.
Although the approximation ratio is catastrophic, in the case of a normal law, %\memo{Why 1 and 0.99 appeared twice in the figure?}
Figure~\ref{approxratio_norm} shows how the value of the approximation ratio changes in function of $n$ for $m$ fixed.
As suggested by Theorem \ref{thm:2}, we notice that the error bound gets better when $n$ increases for $m$ fixed.
Finally we plot the ratio $\frac{meantproj}{meantorg}$ in function of $n$ for $m$ fixed in Figure~\ref{comptime}. We see that, both in the normal case and in the uniform case the ratio tends to get smaller as $n$ and $m$ increase.

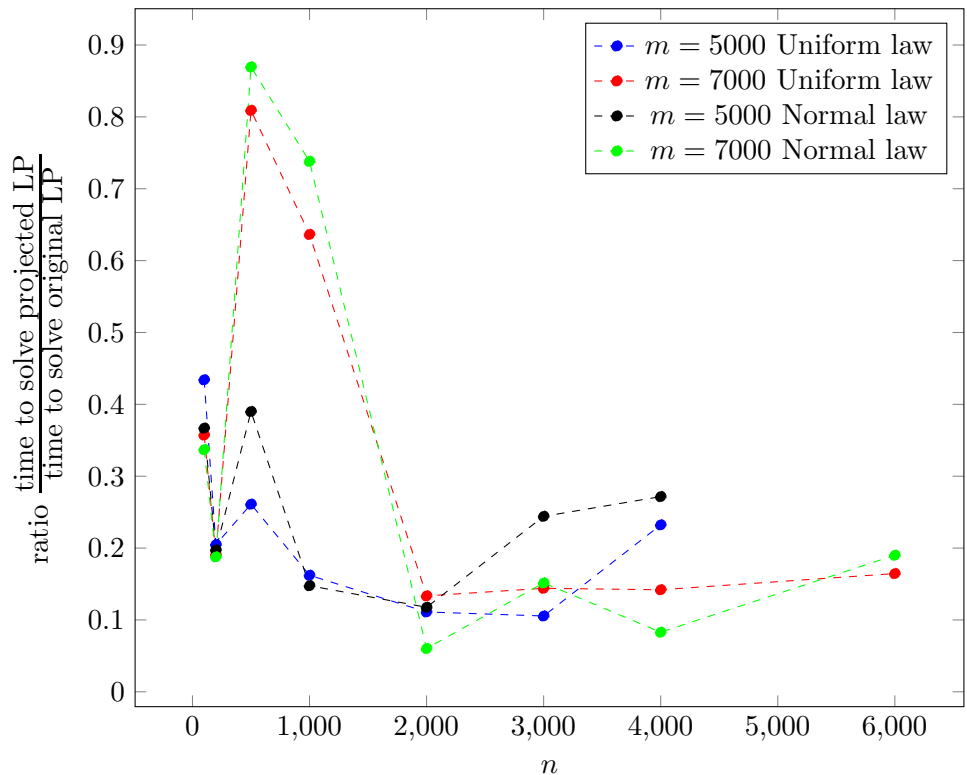
\begin{figure}[tb]
	\centering
	\begin{tikzpicture}
	\begin{axis}[
	xlabel=$n$ , % \hertz requires SIunits
	ylabel=ratio $\frac{\mbox{time to solve projected LP}}{\mbox{time to solve original LP}}$ ,
	width=0.75\linewidth,
	]
	\addplot[dashed, mark=*,mark size=2pt,color=blue] %
	table[x=n,y=tratio,col sep=comma]{t5U.csv};
	\addlegendentry{$m=5000$ Uniform law};
	
	\addplot[dashed, mark=*,mark size=2pt,color=red] %
	table[x=n,y=tratio,col sep=comma]{t7U.csv};
	\addlegendentry{$m=7000$ Uniform law};
	
	\addplot[dashed, mark=*,mark size=2pt,color=black] %
	table[x=n,y=tratio,col sep=comma]{t5N.csv};
	\addlegendentry{$m=5000$ Normal law};
	
	\addplot[dashed, mark=*,mark size=2pt,color=green] %
	table[x=n,y=tratio,col sep=comma]{t7N.csv};
	\addlegendentry{$m=7000$ Normal law};
	
	\end{axis}
	\end{tikzpicture}
	\caption{The ratio of computation times between meantorg and meantproj}
	\label{comptime}
\end{figure}

\subsection{Real instances}
We now discuss numerical results on $3$ real instances taken from the \href{http://plato.asu.edu/ftp/lptestset}{Hans Mittelmann collection} (http://plato.asu.edu/ftp/lptestset/). All these instances are LP relaxations from Integer Linear Problem (ILP) taken from \cite{miplib}.

The first instance we consider, \textit{a2864-99blp}, is the LP-relaxation of a clique problem. The problem has $n=200787$ variables all belonging to $[0,1]$ and $m=22117$ constraints. It has a nonzero density equal to %$0.00452141$
$4.52\times 10^{-3}$ and is originally solved in $1056.23$ seconds. Using $\varepsilon=0.2$ the projected problem has $k=452$ constraints and was solved in $103.82$ seconds for an approximation ratio equal to $8.87\times 10^{-2}$.
 
 The second instance we consider, \textit{rmine15} comes from open pit mining over a cube. The problem has $n=42438$ variables all belonging to $[0,1]$ and $m=358395$ constraints. It has a nonzero density equal to $5.78407\times 10^{-5}$ and is originally solved in $623.20$ seconds. Using $\varepsilon=0.2$ the projected problem has $k=577$ constraints and was solved in $18.20$ seconds for an approximation ratio equal to $7.72 \times 10^{-1}$. %$0.772266945984551$.
 
 The third instance \textit{scpm1} has $n=500000$ variables all belonging to $[0,1]$ and $m=5000$ constraints. It has a nonzero density equal to $2.50 \times 10^{-3}$ and is originally solved in $25.65$ seconds. Using $\varepsilon=0.2$ the projected problem has $k=385$ constraints and has been solved in $537.10$ seconds for an approximation ratio equal to  %$0.378720600617825$.
 $3.79 \times 10^{-1}$. For this instance we notice that it takes much more time to solve the projected problem rather than the original one. One possible explanation is that since the projection matrix is not sparse, the projected problem lose all its sparsity pattern after random projections, which may lead to a greater solving time despite having fewer constraints. Furthermore, the ratio between the number of constraints of the original problem and the projected problem is quite small, compared to the other cases. This might explain why, for this case,  reducing the number of constraints did not reduce the solving time. 

\section{Conclusion and future works}
In this paper we applied random projection to reduce the number of constraints of linear optimization problems over a cone $\mathcal{K}$ which is a product of the non-negative orthant and semidefinite cones, i.e., $\mathcal{K}=\mathbb{R}^m_{+} \times \PSDcone{p_1}\times \cdots \times \PSDcone{p_l}$. We considered a random projection matrix $\mathcal{Q}$ such that $\mathcal{Q}(\mathcal{K})$ is also a product of the non-negative orthant and semidefinite cones, each having smaller dimension. Under some conditions on the original problem, we could obtain some theoretical guarantees on the value of the projected problem.

One possible future work would be to consider sparse projection matrices to further reduce the time to solve the projected problem. One such approach %to deal with such case
would be to consider \textit{Johnson-Lindenstrauss transform}:  

\begin{defi}\label{def:1}
	A random $k \times l$ matrix $T$ is a Johnson-Lindenstrauss transform (JLT) with parameters $(\varepsilon,\delta,h)$ if with probability at least $1-\delta$, for any $h$-element subset $\mathcal{Z}\subset \mathbb{R}^l$, for all $z_1,z_2\in \mathcal{Z}$ we have
	\begin{equation}\label{eq:jll2}
	(1-\varepsilon)\|z_1-z_2\|^2_2 \le \|Tz_1-Tz_2\|^2_2 \le (1+\varepsilon)\|z_1-z_2\|^2_2,
	\end{equation}
\end{defi}

By Lemma \ref{lem:jll0}, Gaussian random matrices are JLT. More generally, in \cite{matousek-jll}, it is proven that random matrices with independent sub-Gaussian entries are also JLT. In \cite[Section 5.1]{d2020random}, 
it is proven that sparse Gaussian matrices, where each entry is non-zero with probability $1-\gamma$, are JLT. However to generalize the approach presented in this paper to JLTs we would need a more general version of Lemma \ref{Zhang}.

Another topic for further research is the case of  linear conic problems where the underlying cone $\mathcal{K}$ is only assumed to be symmetric. This  would, for example, allow us to consider second-order cone constraints or to deal more effectively with the case where there is a large number of small-dimensional SDP constraints. However, such generalizations  would require more sophisticated tools dealing  with random matrices over more general algebras.

% Acknowledgments here
\section*{Acknowledgments}
We thank the referee for their  comments, which helped to improve the paper.
This work was partially supported by the Grant-in-Aid for Scientific Research (B) (19H04069) and the Grant-in-Aid for Young Scientists (19K20217) from Japan Society for the Promotion of Science.
% Enter the text of acknowledgments here

%\section*{References}

\bibliography{dr2}

\end{document}